\newcommand{\setword}[2]{%
  \phantomsection
  #1\def\@currentlabel{\unexpanded{#1}}\label{#2}%
}
\definecolor{uuuuuu}{rgb}{0.26666666666666666,0.26666666666666666,0.26666666666666666}
\definecolor{xdxdff}{rgb}{0.49019607843137253,0.49019607843137253,1.}
\definecolor{ffqqqq}{rgb}{1.,0.,0.}
\definecolor{ffqqqq}{rgb}{1.,0.,0.}
\definecolor{ffxfqq}{rgb}{1.,0.4980392156862745,0.}
\definecolor{uuuuuu}{rgb}{0.26666666666666666,0.26666666666666666,0.26666666666666666}
\definecolor{qqwuqq}{rgb}{0.,0.39215686274509803,0.}
\definecolor{zzttqq}{rgb}{0.6,0.2,0.}
\definecolor{xdxdff}{rgb}{0.49019607843137253,0.49019607843137253,1.}
\definecolor{qqqqff}{rgb}{0.,0.,1.}
\definecolor{cqcqcq}{rgb}{0.7529411764705882,0.7529411764705882,0.7529411764705882}
\definecolor{sqsqsq}{rgb}{0.12549019607843137,0.12549019607843137,0.12549019607843137}
\theoremstyle{plain}
\newtheorem{theorem}[subsection]{Theorem}
\newtheorem{theo1}[subsubsection]{Theorem}
\newtheorem{corollary}[subsection]{Corollary}
\newtheorem{lemma}[subsection]{Lemma}
\newtheorem{defi}[subsection]{Definition}
\newtheorem{prop}[subsection]{Proposition}
\newtheorem{lemma1}[subsubsection]{Lemma}
\theoremstyle{definition}
\newtheorem{defi1}[subsection]{Definition}
\newtheorem{exam}[subsection]{Example}
\newtheorem{remark}[subsection]{Remark}
\newtheorem{remark1}[subsubsection]{Remark}
\newcommand{\uu}{\cup}% union
\newcommand{\ii}{\cap}% intersection
\newcommand{\UU}{\bigcup}% big union
\newcommand{\ci}{\subseteq}% contained in with equality
\newcommand{\sci}{\subset}% strictly contained in
\newcommand{\es}{\emptyset}% the empty set
\newcommand{\set}[1]{\{#1\}}% set
\newcommand{\ga}{\alpha}
\newcommand{\gb}{\beta}
\newcommand{\gd}{\delta}
\renewcommand{\gg}{\gamma}% old use >>
\newcommand{\gk}{\kappa}
\newcommand{\gs}{\sigma}
\newcommand{\gt}{\tau}
\newcommand{\gL}{\Lambda}
\newcommand{\tit}{\textit}% text italic
\newcommand{\C}[1]{\mathcal{#1}}% Euler Script - only caps, use as \C{A}
\newcommand{\D}[1]{\mathbb{#1}}% Doubled - blackboard bold - only caps, uas as \D{A}
\newcommand{\te}{\text}% same as \mathrm command.
\begin{document}
To appear, Journal of Fractal Geometry
\title{Constrained quantization for the Cantor distribution}

\author{$^1$Megha Pandey}
 \author{$^2$Mrinal Kanti Roychowdhury}

\address{$^{1}$Department of Mathematical Sciences \\
Indian Institute of Technology (Banaras Hindu University)\\
Varanasi, 221005, India.}
\address{$^{2}$School of Mathematical and Statistical Sciences\\
University of Texas Rio Grande Valley\\
1201 West University Drive\\
Edinburg, TX 78539-2999, USA.}

\email{$^1$meghapandey1071996@gmail.com, $^2$mrinal.roychowdhury@utrgv.edu}

\subjclass[2010]{Primary 28A80; Secondary 94A34, 60Exx.}
\keywords{Cantor distribution, constrained quantization error, optimal sets of $n$-points, constrained quantization dimension, constrained quantization coefficient}

\date{}
\maketitle

\pagestyle{myheadings}\markboth{Megha Pandey and Mrinal Kanti Roychowdhury}{Constrained quantization for the Cantor distribution}

\begin{abstract}
The theory of constrained quantization has been recently introduced by Pandey and Roychowdhury. In this paper, they have further generalized their previous definition of constrained quantization and studied the constrained quantization for the classical Cantor distribution. Toward this, they have calculated the optimal sets of $n$-points, $n$th constrained quantization errors, the constrained quantization dimensions, and the constrained quantization coefficients, taking different families of constraints for all $n\in \mathbb N$. The results in this paper show that both the constrained quantization dimension and the constrained quantization coefficient for the Cantor distribution depend on the underlying constraints. It also shows that the constrained quantization coefficient for the Cantor distribution can exist and be equal to the constrained quantization dimension. These facts are not true in the unconstrained quantization for the Cantor distribution. 
\end{abstract}

\section{Introduction}
Real-life problems, such as information theory, data compression, signal processing, etc., consist of a large number of data that is not easy to handle. In order to deal with such a data set, the theory of quantization comes into play (see, for instance, \cite{GG, GL, GL1, GN,  P, Z1, Z2}). Quantization is a process of discretization, in other words, to represent a set with a large number of elements, discrete or continuous, by a set with a smaller number of elements. Several mathematical theories have been introduced in the literature concerning the process of quantization. Graf and Luschgy gave the rigorous mathematical treatment in \cite{GL}. In \cite{GL2}, Graf and Luschgy studied the quantization problem for the canonical probability measure on the classical Cantor set. 
\par
Recently, in \cite{PR}, the authors introduced the concept of constrained quantization. A quantization without a constraint is referred to as an unconstrained quantization, which traditionally in the literature is referred to as a quantization, as mentioned in the previous paragraph. 
The theory of constrained quantization is a fascinating area of research, and it invites a lot of new areas to work with a number of applications. With the introduction of constrained quantization, quantization now has two classifications: \tit{constrained quantization} and \tit{unconstrained quantization}. 
In this paper, the authors have further generalized the definition of constrained quantization given in \cite{PR}, and study the concept of constrained quantization for the canonical probability measure on the classical Cantor set.

\begin{defi}\label{Vr}
Let $P$ be a Borel probability measure on space $\D R^k$ equipped with a metric $d$ induced by a norm $\lVert\cdot\rVert$ on $\D R^k$, and $r \in (0, \infty)$. 
Let $\set{S_j\ci \D R^k: j\in \D N}$, where $\mathbb N$ denotes the set of all natural numbers, be a family of closed sets with $S_1$ nonempty. Then, for $n\in \mathbb{N}$, the \tit {$n$th constrained quantization
error} for $P$, of order $r$ with respect to the family of constraints $\set{S_j\ci \D R^k: j\in \D N}$, is defined by
\begin{equation} \label{EqVr}
V_{n, r}:=V_{n, r}(P)=\inf \Big\{\int \mathop{\min}\limits_{a\in\ga} d(x, a)^r dP(x) : \ga \ci \UU_{j=1}^nS_j, ~ 1\leq  \text{card}(\ga) \leq n \Big\},
\end{equation}
where $\te{card}(A)$ represents the cardinality of the set $A$.
\end{defi}
The number 
\begin{equation*}
  V_r(P; \ga):= \int \mathop{\min}\limits_{a\in\ga} d(x, a)^r dP(x)
\end{equation*}
is called the distortion error for $P$, of order $r$, with respect to a set $\ga \ci \D R^k$. 
The sets $S_j$ are the constraints in the constrained quantization error.
We assume that $\int d(x, 0)^r dP(x)<\infty$ to make sure that the infimum in \eqref{EqVr} exists (see \cite{PR}).  A set $ \ga \ci \mathop{\UU}\limits_{j=1}^n S_j$ for which the infimum in  \eqref{EqVr} exists and does not contain more than $n$ elements is called an \tit{optimal set of $n$-points} for $P$. Elements of an optimal set are called \tit{optimal elements}. 
\begin{remark}
In Definition~\ref{Vr} of constrained quantization error, if all $S_j$ for $j\in \D N$ are identical, then it reduces to the definition of constrained quantization error introduced by Pandey and Roychowdhury in \cite{PR}. Furthermore, if
$S_j=\D R^k$ for all $j\in \D N$, then it reduces to the definition of \tit{$n$th unconstrained quantization error}, which traditionally in the literature is referred to as the \tit{$n$th quantization error} (see \cite{GL}).  
For some recent work in the direction of unconstrained quantization, one can see \cite{GL,  GL2, DFG, DR, GL3, KNZ, MR, PRRSS, P1, R1, R2, R3, R4}. 
\end{remark} 

 Let $V_{n, r}(P)$ be a strictly decreasing sequence, and write $V_{\infty, r}(P):=\mathop{\lim}\limits_{n\to \infty} V_{n, r}(P)$. 
Then, the number $D_r(P)$ defined by 
\begin{equation} \label{eq001000} D_r(P):=\mathop{\lim}\limits_{n\to \infty} \frac{r\log n}{-\log (V_{n, r}(P)-V_{\infty, r}(P))}, 
\end{equation} 
if it exists, is called the \tit{constrained quantization dimension} of $P$ of order $r$. The constrained quantization dimension measures the speed at which the specified measure of the constrained quantization error converges as $n$ tends to infinity.
For any $\gk>0$, the  number  
\begin{equation} \label{eq00100} \lim_{n\to \infty} n^{\frac r \gk}  (V_{n, r}(P)-V_{\infty, r}(P)),\end{equation} if it exists, is called the \tit{$\gk$-dimensional constrained quantization coefficient} for $P$ of order $r$.

\begin{remark}
In unconstrained quantization, $V_{\infty, r}(P):=\mathop{\lim}\limits_{n\to \infty} V_{n, r}(P)=0$. Hence, in unconstrained quantization, i.e., when $V_{\infty, r}(P)=0$, the definitions of constrained quantization dimension and the $\gk$-dimensional constrained quantization coefficient defined by \eqref{eq001000} and \eqref{eq00100}, respectively, reduce to the corresponding definitions in unconstrained scenario (see \cite{GL}). 
\end{remark}

This paper deals with $r=2$ and $k=2$, and the metric on $\D R^2$ as the Euclidean metric induced by the Euclidean norm $\lVert \cdot\rVert$. Instead of writing $V_r(P; \ga)$ and $V_{n, r}:=V_{n, r}(P)$ we will write them as $V(P; \ga)$ and $V_n:=V_{n}(P)$, i.e., $r$ is omitted in the subscript as $r=2$ throughout the paper.
Let us take the family $\set{S_j : j\in \D N}$, that occurs in Definition \ref{Vr} as follows:
\begin{equation} \label{eq000} S_j=\set{(x, y) : 0\leq x\leq 1 \te{ and } y=\frac 1j}  \te{ for all } j\in \D N.
\end{equation}
Let $T_1, T_2 : \mathbb R \to \mathbb R$ be two contractive similarity mappings such that
$T_1(x)=\frac 13 x$ and $ T_2 (x)=\frac 13 x +\frac 23$. Then, there exists a unique Borel probability measure $P$
on $\D R$ such that
$P=\frac 12 P\circ T_1^{-1}+\frac 12 P\circ T_2^{-1}$, where $P\circ T_i^{-1}$ denote the image measures of $P$ with respect to
$T_i$ for $i=1,  2$ (see \cite{H}). If $k\in \D N$, and $\gs:=\gs_1\gs_2 \cdots \gs_k \in \{ 1, 2\}^k$, then we call $\gs$ a word of length $k$ over the alphabet $I:=\set{1, 2}$, and denote it by
$|\gs|:=k$. By $I^\ast$, we denote the set of all words, including the empty word $\es$. Notice that the empty word has a length zero. For any word $\gs:=\gs_1\gs_2 \cdots \gs_k \in I^\ast$,  we write \[T_\gs:=T_{\gs_1}\circ \cdots \circ T_{\gs_k} \text{ and } J_\gs:=T_\gs([0, 1]).\]
Then, the set $C:=\bigcap_{k\in \mathbb N} \bigcup_{\gs \in \{1, 2\}^k} J_\gs$ is known as the \textit{Cantor set} generated by the
two mappings $T_1$ and $T_2$, and equals the support of the probability measure $P$, where $P$ can be written as
 \[P=\sum_{\gs\in \set{1, 2}^k} \frac 1 {2^k}  P\circ T_\gs^{-1}.\]
For this probability measure $P$, Graf and Luschgy determined the optimal sets of $n$-means and the $n$th quantization errors for all $n\in \D N$ (see \cite{GL2}). They also showed that the unconstrained quantization dimension of the measure $P$ exists and equals $\frac{\log 2}{\log 3}$, which is the Hausdorff dimension of the Cantor set $C$, and the unconstrained quantization coefficient does not exist. In fact, in \cite{GL2}, they showed that the lower and the upper unconstrained quantization coefficients exist as finite positive numbers.

\subsection{Delineation} 
In this paper, first, we have determined the optimal sets of $n$-points and the $n$th constrained quantization errors for all $n\in\D N$ for the Borel probability measure $P$ with support the Cantor set $C$. Then, we have calculated the constrained quantization dimension and the constrained quantization coefficient. We have shown that both the constrained quantization dimension $D(P)$ and the $D(P)$-dimensional constrained quantization coefficient exist and are equal to one, i.e., they coincide. 
Then, in the last section, taking different families of constraints for all $n\in \D N$, we investigate the optimal sets of $n$-points, $n$th constrained quantization errors, the constrained quantization dimensions, and the constrained quantization coefficients. 
From work in this paper, it can be seen that the constrained quantization dimension of the Cantor distribution depends on the family of constraints $\set{S_j: j\in \D N}$, i.e., the constrained quantization dimension is not always equal to the Hausdorff dimension of the Cantor set as it occurs in the case of unconstrained quantization (see \cite{GL2}). In the unconstrained quantization, the $D(P)$-dimensional quantization coefficient does not exist (see \cite{GL2}). But from work in the last section, we see that the $D(P)$-dimensional constrained quantization coefficient also depends on the constraints; it may or may not exist.
 
\section{Preliminaries}
In this section, we give some basic notations and definitions which we have used throughout the paper. 
As defined in the previous section, let $I:=\{1, 2\} $ be an alphabet. For any two words $\gs:=\gs_1\gs_2\cdots \gs_k$ and
$\tau:=\tau_1\tau_2\cdots \tau_\ell$ in $I^*$, by
$\gs\tau:=\gs_1\cdots \gs_k\tau_1\cdots \tau_\ell$, we mean the word obtained from the concatenation of the two words $\gs$ and $\tau$. For $\gs, \gt\in I^\ast$, $\gs$ is called \textit{an extension of} $\gt$ if $\gs=\gt x$ for some word $x\in I^\ast$. The mappings $T_i:\D R \to \D R,\ i=1,2, $ such that $T_1(x)=\frac 13x$ and $T_2(x)=\frac 13x+\frac 23$ are the generating maps of the Cantor set $C $, which is the support of the probability measure $P$ on $\D R$ given by $P=\frac 12 P\circ T_1^{-1}+\frac 12 P\circ T_2^{-1}$. For $\gs:=\gs_1\gs_2 \cdots\gs_k \in I^k$, write 
$J_\gs=T_\gs [0, 1]$, where $T_\gs:=T_{\gs_1}\circ T_{\gs_2}\circ\cdots \circ T_{\gs_k}$ is a composition mapping. Notice that $J:=J_\es=T_\es[0, 1]=[0,1]$. 
Then, for any $k\in \D N$, as mentioned before, we have 
\[C=\bigcap_{k\in \mathbb N} \bigcup_{\gs \in I^k} J_\gs \te{ and  } P=\sum_{\gs \in I^k}\frac 1 {2^k} P\circ T_\gs^{-1}.\]
The elements of the set $\{J_\gs : \gs \in I^k \}$ are the $2^k$ intervals in the $k$th level in the construction of the Cantor set $C$, and are known as the \textit{basic intervals at the $k$th level.}  The intervals $J_{\gs 1}$, $J_{\gs 2}$, into which $J_\gs$ is split up at the $(k+1)$th level are called the \textit{children of $J_\gs$}.

With respect to a finite set $\ga \sci \D R^2$, by the \tit{Voronoi region} of an element $a\in \ga$, it is meant the set of all elements in $\D R^2$ which are nearest to $a$ among all the elements in $\ga$, and is denoted by $M(a|\ga)$.
 For any two elements $(a, b)$ and $(c, d)$ in $\D R^2$, we write 
 \[\rho((a, b), (c, d)):=(a-c)^2 +(b-d)^2,\] which gives the squared Euclidean distance between the two elements $(a, b)$ and $(c, d)$. Let $p$ and $q$ be two elements that belong to an optimal set of $n$-points for some positive integer $n$. Then, $p$ and $q$  are called \tit{adjacent elements} if they have a common boundary in their own Voronoi regions. Let $e$ be an element on the common boundary of the Voronoi regions of the adjacent elements $p$ and $q$.
 Since the common boundary of the Voronoi regions of any two adjacent elements is the perpendicular bisector of the line segment joining the elements, we have
\[\rho(p, e)-\rho(q, e)=0. \]
We call such an equation a \tit{canonical equation}.
Notice that any element $x\in \D R$ can be identified as an element $(x, 0)\in \D R^2$. Thus, the nonnegative real-valued function $\rho$ on $\D R \times \D R^2$ defined by 
\[\rho: \D R \times \D R^2 \to [0, \infty) \te{ such that } \rho(x, (a, b))=(x-a)^2 +b^2,\]
represents the squared Euclidean distance between an element $x\in \D R$ and an element $(a, b)\in \D R^2$. Let $\pi: \D R^2 \to \D R$ such that $\pi(a, b)=a$ for any $(a, b) \in \D R^2$ denote the projection mapping. For a random variable $X$ with distribution $P$, let $E(X)$ represent the expected value, and $V:=V(X)$ represent the variance of $X$. 

The following lemmas are well-known (see \cite{GL2}).  
\begin{lemma} \label{lemma1}
Let $f : \mathbb R \to \mathbb R^+$ be Borel measurable and $k\in \mathbb N$. Then,
\[\int f dP=\sum_{\gs \in \{1, 2\}^k} p_\gs \int f \circ T_\gs dP.\]
\end{lemma}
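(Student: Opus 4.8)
The plan is to obtain the identity as an immediate consequence of two facts: the self-similar decomposition of $P$ at level $k$, which is already recorded in the excerpt, and the elementary change-of-variables formula for image (push-forward) measures.

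First I would recall that for every $k\in\D N$ one has $P=\sum_{\gs\in I^k} p_\gs\, P\circ T_\gs^{-1}$, where $p_\gs=\frac1{2^k}$ for $|\gs|=k$; this is the iterated form of $P=\frac12 P\circ T_1^{-1}+\frac12 P\circ T_2^{-1}$ already stated above (it follows by an easy induction on $k$, using $T_{\tau i}=T_\tau\circ T_i$ so that $(P\circ T_\tau^{-1})\circ T_i^{-1}=P\circ T_{\tau i}^{-1}$, together with multiplicativity of the weights). Next I would invoke the standard fact that for any Borel measurable $g:\D R\to[0,\infty]$ and any Borel measurable $T:\D R\to\D R$,
\[
\int g\, d(P\circ T^{-1})=\int g\circ T\, dP,
\]
which is proved in the usual layered way: it holds for $g=\mathbf 1_A$ by the very definition $P\circ T^{-1}(A)=P(T^{-1}A)$, extends to nonnegative simple $g$ by linearity, and then to arbitrary nonnegative measurable $g$ by monotone convergence (with both sides permitted to be $+\infty$).

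Combining these, and using that the sum over $I^k$ has only finitely many ($2^k$) terms so that it commutes with the integral,
\[
\int f\, dP=\int f\, d\Big(\sum_{\gs\in I^k} p_\gs\, P\circ T_\gs^{-1}\Big)=\sum_{\gs\in I^k} p_\gs\int f\, d(P\circ T_\gs^{-1})=\sum_{\gs\in I^k} p_\gs\int f\circ T_\gs\, dP,
\]
which is exactly the asserted identity. There is no genuine obstacle here; the lemma is a routine application of the push-forward formalism, and the only point deserving a moment's care is that $f$ is assumed merely nonnegative rather than $P$-integrable, so the change-of-variables step should be stated for $[0,\infty]$-valued functions and justified by monotone convergence rather than by dominated convergence.
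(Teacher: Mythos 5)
Your proof is correct and is exactly the standard argument: the paper itself omits a proof of this lemma, simply citing Graf--Luschgy, and the argument given there is the same two-step one you use (iterate the self-similarity relation to the level-$k$ decomposition $P=\sum_{\gs\in I^k}p_\gs\,P\circ T_\gs^{-1}$ with $p_\gs=2^{-k}$, then apply the push-forward change-of-variables formula for nonnegative measurable functions via simple functions and monotone convergence). Your remark about needing monotone rather than dominated convergence because $f$ is only assumed nonnegative is the right point of care; nothing is missing.
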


 \begin{lemma} \label{lemma2}
Let $ X$ be a random variable with probability distribution $P$.  Then, $E(X)= \frac 12  \te{ and } V:=V(X)=E\lVert X-\frac 1 2 \rVert^2=E(X-\frac 12)^2=\frac 1 8$. Moreover, for any $x_0\in \D R$, we have 
\[\int (x-x_0)^2 dP(x)=V(X)+(x-\frac 12)^2.\]
\end{lemma}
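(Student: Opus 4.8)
The plan is to exploit the self-similarity identity $P=\frac12 P\circ T_1^{-1}+\frac12 P\circ T_2^{-1}$, equivalently Lemma~\ref{lemma1} with $k=1$, to turn each moment computation into a solvable linear fixed-point equation. Since $\operatorname{supp}(P)=C\subseteq[0,1]$, every moment of $X$ is automatically finite, so all the integrals below are well defined.

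First I would compute $E(X)=\int x\,dP(x)$. Applying Lemma~\ref{lemma1} with $f(x)=x$ and $k=1$ gives $E(X)=\frac12\int T_1(x)\,dP(x)+\frac12\int T_2(x)\,dP(x)$. Using $T_1(x)=\frac13 x$ and $T_2(x)=\frac13 x+\frac23$, the right-hand side equals $\frac13 E(X)+\frac13$, so solving $\frac23 E(X)=\frac13$ yields $E(X)=\frac12$.

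Next, for the variance $V=V(X)=E(X-\frac12)^2$, I would again apply Lemma~\ref{lemma1}, now with $f(x)=(x-\frac12)^2$. The key algebraic step is the affine rewriting $T_1(x)-\frac12=\frac13(x-\frac12)-\frac13$ and $T_2(x)-\frac12=\frac13(x-\frac12)+\frac13$. Expanding the squares, integrating term by term, and using that the first moment about $\frac12$ vanishes, i.e.\ $\int(x-\frac12)\,dP(x)=E(X)-\frac12=0$, gives $\int(T_i(x)-\frac12)^2\,dP(x)=\frac19 V+\frac19$ for $i=1,2$. Hence $V=\frac12\big(\frac19 V+\frac19\big)+\frac12\big(\frac19 V+\frac19\big)=\frac19 V+\frac19$, and solving $\frac89 V=\frac19$ gives $V=\frac18$. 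The identity $E\lVert X-\frac12\rVert^2=E(X-\frac12)^2$ is immediate since the norm on $\D R$ is the absolute value.

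Finally, for the ``Moreover'' statement I would expand $(x-x_0)^2=(x-\frac12)^2+2(\frac12-x_0)(x-\frac12)+(\frac12-x_0)^2$ and integrate against $P$; the cross term vanishes because $\int(x-\frac12)\,dP(x)=0$, leaving $\int(x-x_0)^2\,dP(x)=V(X)+(x_0-\frac12)^2$, the classical parallel-axis identity. There is no genuine obstacle here: the only point requiring care is performing the affine substitutions inside the contraction maps correctly and checking that the first-moment terms cancel, so that the recursions for $E(X)$ and $V$ close into linear equations with unique solutions.
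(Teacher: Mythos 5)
Your proof is correct and is exactly the standard self-similarity argument; the paper itself gives no proof of this lemma, simply citing it as well-known from [GL2], where the same fixed-point computation via $P=\frac12 P\circ T_1^{-1}+\frac12 P\circ T_2^{-1}$ is used. Note that your final formula $\int(x-x_0)^2\,dP(x)=V(X)+(x_0-\frac12)^2$ is the right one — the $(x-\frac12)^2$ appearing in the statement of the lemma is a typo for $(x_0-\frac12)^2$.
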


\begin{remark}
For words $\gb, \gg, \cdots, \gd$ in $I^\ast$, by $a(\gb, \gg, \cdots, \gd)$ we mean the conditional expectation of the random variable $ X$ given $J_\gb\uu J_\gg \uu\cdots \uu J_\gd,$ i.e.,
\begin{equation*} \label{eq45} a(\gb, \gg, \cdots, \gd)=E(X : X\in J_\gb \uu J_\gg \uu \cdots \uu J_\gd)=\frac{1}{P(J_\gb\uu \cdots \uu J_\gd)}\int_{J_\gb\uu \cdots \uu J_\gd} x \, dP.
\end{equation*}
Recall Lemma \ref{lemma1}, for each $\gs \in I^\ast$, since $T_\gs$ is a similarity mapping, we have
\begin{align*}
a(\gs)&=E(X : X \in J_\gs) =\frac{1}{P(J_\gs)} \int_{J_\gs} x \,dP=\int_{J_\gs} x  d(P\circ T_\gs^{-1})=\int T_\gs (x) \, dP\\
&=E(T_\gs(X))=T_\gs(E(X))=T_\gs(\frac 12).
\end{align*}
\end{remark}

\begin{defi1}  \label{defi1} 
For $n\in \D N$ with $n\geq 2$, let $\ell(n)$ be the unique natural number with $2^{\ell(n)} \leq n<2^{\ell(n)+1}$ and $S_n=\set{(x, \frac 1 n) : 0\leq x\leq 1}$. For $I\sci \set{1, 2}^{\ell(n)}$ with card$(I)=n-2^{\ell(n)}$ let $\ga_n(I)\ci S_n$ be the set such that
\[\ga_n(I)=\set{(a(\gs), \frac 1{n}) : \gs \in \set{1,2}^{\ell(n)} \setminus I} \uu \set{(a(\gs 1), \frac 1 n) : \gs \in I} \uu \set {(a(\gs 2), \frac  1n) : \gs \in I}.\]
\end{defi1}

\begin{prop} \label{prop0}
Let $\ga_n(I)$ be the set given by Definition \ref{defi1}. Then, the number of such sets is  ${}^{2^{\ell(n)}}C_{n-2^{\ell(n)}}$, and the corresponding distortion error is given by
\begin{equation*}\label{eq11}
V(P; \ga_n(I))=\int\mathop{\min}\limits_{a\in\ga_n(I)} \rho(x, a)\, dP=\frac 1 {18^{\ell(n)}}V\Big(2^{\ell(n)+1}-n+\frac 1 9(n-2^{\ell(n)})\Big)+\frac 1 {n^2},
\end{equation*}
where $V$ is the variance as stated in Lemma~\ref{lemma2}.
\end{prop}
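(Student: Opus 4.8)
The plan is to compute the distortion error by splitting the integral over the $2^{\ell(n)}$ basic intervals at level $\ell(n)$ according to which optimal element of $\ga_n(I)$ is nearest, and then to use the self-similarity via Lemma~\ref{lemma1} together with Lemma~\ref{lemma2}. First I would count the sets: choosing the subset $I \ci \set{1,2}^{\ell(n)}$ with $\te{card}(I) = n - 2^{\ell(n)}$ is exactly a choice of $n - 2^{\ell(n)}$ of the $2^{\ell(n)}$ basic intervals to ``split,'' so there are ${}^{2^{\ell(n)}}C_{n-2^{\ell(n)}}$ such sets, giving the first assertion immediately. For the distortion error, note that all points of $\ga_n(I)$ lie on the horizontal line $y = \frac 1n$, so for a point $x \in J_\gs \ci \D R$ identified with $(x,0)$, the nearest element is determined by the first coordinate, and the contribution of its vertical offset is uniformly $\big(\frac 1n\big)^2 = \frac1{n^2}$; since $P$ is a probability measure this adds exactly $\frac1{n^2}$ to the total, which already produces the additive $\frac1{n^2}$ term. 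So it remains to compute $\sum_{\gs} p_\gs \int_{J_\gs} \min_{a} (x - \pi(a))^2\, dP$ where the min ranges over the first coordinates of $\ga_n(I)$.

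Next I would verify that the Voronoi structure in the first coordinate is the ``expected'' one: for $\gs \notin I$ the single point $a(\gs)$ is nearest to all of $J_\gs$, while for $\gs \in I$ the two points $a(\gs 1), a(\gs 2)$ split $J_\gs$ into its two children $J_{\gs 1}, J_{\gs 2}$. This is where a small geometric argument is needed — one must check that no point of a neighbouring basic interval is closer to $a(\gs 1)$ or $a(\gs 2)$ than to its own designated optimal element, which follows from the fact that the gaps between basic intervals at level $\ell(n)$ are comparable to the interval lengths (the standard Cantor-set separation estimate used in \cite{GL2}). Granting this, Lemma~\ref{lemma1} applied at level $\ell(n)$ gives
\[
\sum_{\gs}\, p_\gs \int_{J_\gs} \min_a (x-\pi(a))^2\, dP
= \frac{1}{2^{\ell(n)}}\sum_{\gs \notin I} \frac{1}{9^{\ell(n)}}\!\int (x - \tfrac12)^2\, dP
\;+\; \frac{1}{2^{\ell(n)}}\sum_{\gs \in I} \frac{1}{9^{\ell(n)}}\!\int \min\{(x-\pi(T_1(\tfrac12)))^2,(x-\pi(T_2(\tfrac12)))^2\}\, dP,
\]
using $p_\gs = \frac1{2^{\ell(n)}}$ and that each $T_\gs$ contracts distances by $3^{-\ell(n)}$. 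By Lemma~\ref{lemma2} the first type of integral equals $V = \frac18$, and the second equals $\frac19 V$ (it is the distortion of the one-level optimal two-point set for $P$, which by the same self-similarity contributes a factor $\frac19$). Collecting the $2^{\ell(n)+1}-n$ unsplit words and the $n - 2^{\ell(n)}$ split words, and writing $18^{\ell(n)} = 2^{\ell(n)}\cdot 9^{\ell(n)}$, this is exactly $\frac{1}{18^{\ell(n)}} V\big(2^{\ell(n)+1}-n + \frac19(n-2^{\ell(n)})\big)$, and adding the $\frac1{n^2}$ term from the vertical offset finishes the computation.

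The main obstacle is the Voronoi-structure verification in the previous paragraph: one must rule out that an optimal element assigned to $J_\gs$ ``steals'' mass from an adjacent basic interval, i.e., that the perpendicular bisectors (equivalently, the canonical equations $\rho(p,e) - \rho(q,e) = 0$ for adjacent $p,q$) fall in the Cantor gaps rather than inside a basic interval. This is a finite check that reduces, by self-similarity, to comparing the half-length of a level-$\ell(n)$ interval with the width of the adjacent gap; since at every level the gap between two children has length equal to the child length, the bisector between $a(\gs 1)$ and $a(\gs 2)$ sits in the middle gap of $J_\gs$ and the bisector between $a(\gs)$ and the optimal element of the next interval sits in the gap between $J_\gs$ and its neighbour, so no interference occurs. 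Everything else is bookkeeping with Lemmas~\ref{lemma1} and~\ref{lemma2}.
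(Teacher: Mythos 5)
Your proposal is correct and follows essentially the same route as the paper: decompose the integral over the level-$\ell(n)$ basic intervals (splitting $J_\gs$ into its children for $\gs\in I$), pull out the uniform vertical contribution $\frac1{n^2}$, and apply the self-similarity of Lemmas~\ref{lemma1} and~\ref{lemma2} to get the factor $\frac{1}{18^{\ell(n)}}V$ per unsplit word and $\frac{1}{18^{\ell(n)}}\cdot\frac{V}{9}$ per split word. The one genuine addition is your explicit check that the Voronoi bisectors fall in the Cantor gaps (so the asserted decomposition really computes the minimum rather than an upper bound), a point the paper's proof passes over silently; apart from a harmless notational slip where $p_\gs$ appears both as a prefactor and implicitly inside $\int_{J_\gs}\cdot\,dP$, everything matches.
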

\begin{proof} 
If $2^{\ell(n)} \leq n<2^{\ell(n)+1}$, then the subset $I$ can be chosen in ${}^{2^{\ell(n)}}C_{n-2^{\ell(n)}}$ different ways, and so, the number of such sets is given by ${}^{2^{\ell(n)}}C_{n-2^{\ell(n)}}$, and the corresponding distortion error is obtained as
\begin{align*}
V(P; \ga_n(I))=&\int \min_{a\in \ga_n(I)}\rho(x, a)\, dP\\
=&\sum_{\gs \in \set{1, 2}^{\ell(n)}\setminus I}\int_{J_\gs}\rho(x, (a(\gs), \frac 1 n))\,dP\\
&+\sum_{\gs \in I} \Big(\int_{J_{\gs1}}\rho(x, (a(\gs1), \frac 1 n))\,dP +\int_{J_{\gs2}}\rho(x, (a(\gs2), \frac 1 n))\,dP\Big)\\
=&\sum_{\gs \in \set{1, 2}^{\ell(n)}\setminus I}\frac 1 {2^{\ell(n)}} \int\rho(T_\gs(x), (a(\gs), \frac 1 n))\,dP\\
&+\sum_{\gs \in I}\frac 1{2^{\ell(n)+1}} \Big(\int\rho(T_{\gs1}(x), (a(\gs1), \frac 1 n))\,dP\\
&+\int\rho(T_{\gs2}(x), (a(\gs2), \frac 1 n))\,dP\Big)\\
=&\sum_{\gs \in \set{1, 2}^{\ell(n)}\setminus I}\frac 1 {2^{\ell(n)}}\Big(\frac 1 {9^{\ell(n)}}V +\frac 1{n^2})+\sum_{\gs \in I}\frac 1{2^{\ell(n)+1}} \Big(\frac 2{9^{\ell(n)+1}} V+\frac 2{n^2}\Big)\\
=&\frac 1 {18^{\ell(n)}}V\Big(2^{\ell(n)+1}-n+\frac 1 9(n-2^{\ell(n)})\Big)+\frac 1 {n^2}.
 \end{align*}
 Thus, the proof of the proposition is complete.
\end{proof}

In the next sections, we give the main results of the paper. 

\section{Optimal sets of $n$-points for all $n\geq 1$} \label{sec1}
In this section, we calculate the optimal sets of $n$-points and the $n$th constrained quantization errors for all $n\in \D N$. For $j\in \D N$, we have the constraints as
\[S_j=\set{(x, y) : 0\leq x\leq 1 \te{ and } y=\frac 1j} \te{ for all } j\in \D N.\]
For all $j\in \D N$, the perpendiculars on the constraints $S_j$ passing through the points $(a, \frac 1 j)\in S_j$ intersect $J$ at the points $a$, where $0\leq a\leq 1$. Thus, for each $j\in \D N$, there exists a one-one correspondence between the element $(a, \frac 1 j)$ on $S_j$ and the element $a$ on $J$. Thus, for $j\in \D N$, there exist bijective functions
\begin{equation} \label{sec30} U_j : S_j \to J\te{ such that } U_j(a, \frac 1j)=a.\end{equation}
Hence, the inverse functions $U_j^{-1}$ are defined as
\[U_j^{-1} : J\to S_j \te{ such that } U_j^{-1}(x)=(x, \frac 1 j).\]

\begin{remark} \label{rem3.0}
For $n\geq 2$, let $\ga_n(I)$ be the set given by Definition \ref{defi1}, and for each $j\in \mathbb{N}$, let $U_j$ be the bijective functions defined by \eqref{sec30}. Then, Proposition \ref{prop0} implies that
\[V(P; \ga_n(I))=V(P; U_n(\ga_n(I)))+\frac 1 {n^2}.\]
\end{remark}

\begin{prop}
An optimal set of one-point is $\set{(\frac 12, 1)}$ with constrained quantization error $V_1=\frac{9}{8}$.
\end{prop}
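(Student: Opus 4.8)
The plan is to reduce the case $n=1$ to a one-variable optimization and then invoke Lemma~\ref{lemma2}. Since $\UU_{j=1}^{1}S_j=S_1=\set{(x,1):0\le x\le 1}$ and the constraint $1\le \te{card}(\ga)\le 1$ forces $\ga$ to be a singleton, every competitor in \eqref{EqVr} is of the form $\ga=\set{(a,1)}$ for some $a\in[0,1]$; hence $V_1=\inf_{a\in[0,1]}\int\rho(x,(a,1))\,dP(x)$.

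Next I would compute the objective explicitly. Using $\rho(x,(a,b))=(x-a)^2+b^2$ from the preliminaries with $b=1$, we get $\int\rho(x,(a,1))\,dP(x)=\int(x-a)^2\,dP(x)+1$, and by Lemma~\ref{lemma2} this equals $V+(a-\frac12)^2+1=\frac18+(a-\frac12)^2+1=\frac98+(a-\frac12)^2$. This is a strictly convex function of $a$ whose unique minimizer $a=\frac12=E(X)$ lies in the admissible interval $[0,1]$, so the infimum is attained, $V_1=\frac98$, and the unique optimal set of one point is $\set{(\frac12,1)}$.

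There is essentially no obstacle here; the only points to watch are that $S_1$ sits at height $y=1$, so the vertical gap contributes exactly $1^2=1$ to the distortion independently of $a$, and that the minimizing abscissa $a=\frac12$ genuinely belongs to $[0,1]$, so that the infimum is in fact a minimum. One may also note, in the spirit of Remark~\ref{rem3.0}, that the projected optimum on $J$ is the conditional expectation $a(\es)=T_\es(\frac12)=\frac12$, while the extra cost forced by the constraint $S_1$ is $\frac{1}{1^2}=1$.
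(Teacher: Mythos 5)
Your proof is correct and follows essentially the same route as the paper: restrict to a singleton $\set{(a,1)}\ci S_1$, compute the distortion as a function of $a$, and minimize at $a=\frac12$. The paper simply expands the integral to $a^2-a+\frac{11}{8}$ directly rather than invoking Lemma~\ref{lemma2}, but this is the same computation written differently.
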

\begin{proof}
Let $\ga:=\set{(a, b)}$ be an optimal set of one-point. Since $\ga \ci S_1$, we have $b=1$. Now, the distortion error for $P$ with respect to the set $\ga$ is give by
\[V(P; \ga)=\int \rho(x, (a, 1)) dP=a^2-a+\frac{11}{8},\]
the minimum value of which is $\frac{9}{8}$ and it occurs when $a=\frac 12$. Thus, the proof of the proposition is complete.
\end{proof}

The following lemma plays an important role in the paper.

\begin{lemma} \label{lemma0}
Let $\ga_n\ci \mathop{\uu}\limits_{j=1}^n S_j$ be an optimal set of $n$-points for $P$ such that
\[\ga_n:=\set{(a_j, b_j) : 1\leq j\leq n},\]
where $a_1<a_2<a_3<\cdots<a_n$. Then, $a_j=E(X :  X\in \pi(M((a_j, b_j)|\ga_n)))$ and $b_j=\frac 1 n$,
where $M((a_j, b_j)|\ga_n)$ are the Voronoi regions of the elements $(a_j, b_j)$ with respect to the set $\ga_n$ for $1\leq j\leq n$.
 \end{lemma}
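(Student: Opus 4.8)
The plan is to split the statement into two assertions — first that $b_j = \frac{1}{n}$ for every $j$, and second that $a_j = E(X : X \in \pi(M((a_j,b_j)\mid\ga_n)))$ — and to reduce each to an optimization that is already understood. The overall strategy mirrors the standard centroid-condition argument for optimal quantizers, adapted to the constraint that the $j$th point must lie on one of the horizontal lines $S_1, \dots, S_n$.

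First I would establish $b_j = \frac{1}{n}$. Fix the projections $x$-coordinates $a_1 < \cdots < a_n$ and hence the induced partition of $J$ into the sets $\pi(M((a_j,b_j)\mid\ga_n))$; note that for a point $(a,b)$ on $J$-side computations, $\rho(x,(a,b)) = (x-a)^2 + b^2$, so the contribution of the $j$th cell to the distortion error is $\int_{\pi(M_j)} \big((x-a_j)^2 + b_j^2\big)\,dP(x)$, where the $b_j^2$ term pulls out as $b_j^2 \cdot P(\pi(M_j))$. Since each $(a_j,b_j)$ is only required to lie in $\UU_{i=1}^n S_i$, the admissible values of $b_j$ are exactly $\{1, \tfrac12, \dots, \tfrac1n\}$, and the distortion is strictly decreasing in $b_j^2$; hence the optimal choice for every $j$ is the smallest available value $b_j = \frac1n$. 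The one subtlety is that the $b_j$ need not a priori be distinct — but since replacing any $b_j$ by $\frac1n$ strictly decreases the objective unless it already equals $\frac1n$, optimality forces all of them down to $\frac1n$, which is permissible because $S_n$ is a single line containing room for up to $n$ points. (Comparison with Remark~\ref{rem3.0} and Proposition~\ref{prop0} confirms this is consistent with the $\frac1{n^2}$ additive term.)

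Having fixed $b_j = \frac1n$, I would then derive the centroid condition on $a_j$. With the Voronoi partition held fixed, the objective becomes $\sum_{j=1}^n \int_{\pi(M_j)} (x-a_j)^2\,dP(x) + \frac1{n^2}$, and by Lemma~\ref{lemma2} (applied to the conditional measure on each cell, or just by expanding the square and differentiating in $a_j$) each summand is minimized exactly when $a_j$ equals the conditional expectation $E(X : X \in \pi(M_j))$. Conversely, if $a_j$ is the centroid of its cell, the standard argument shows the Voronoi partition associated to $\ga_n$ is (up to $P$-null boundary sets) the one used, so the two conditions are mutually consistent and any optimal $\ga_n$ must satisfy both. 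I expect the main obstacle to be the interplay between the two optimizations — one cannot independently optimize the $a_j$ and the partition, since changing the $a_j$ changes the Voronoi regions; this is handled by the usual fixed-point / alternating-minimization reasoning (an optimal configuration is simultaneously a fixed point of "recompute centroids" and "recompute Voronoi cells"), together with the observation that moving every $b_j$ to $\frac1n$ does not disturb which $x$-cell an element $x \in \D R \equiv \D R \times \{0\}$ is assigned to, because all points share the same second coordinate and so nearest-point assignment among them depends only on the $a_j$. A final minor point to check is that the ordering $a_1 < \cdots < a_n$ is preserved and that no cell is $P$-null (else the corresponding point could be relocated to strictly reduce the error, contradicting optimality), which pins down the structure asserted in the lemma.
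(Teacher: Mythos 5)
Your proposal is correct and follows essentially the same route as the paper: both split $\rho(x,(a_j,b_j))=(x-a_j)^2+b_j^2$ into a horizontal part minimized by the centroid condition and a vertical part $b_j^2\cdot P(\pi(M_j))$ minimized by choosing the lowest admissible line, so that $b_j=\frac1n$. (The only difference is cosmetic: the paper carries out the computation via the self-similar decomposition of $P$ over the basic intervals contained in each Voronoi region, whereas you argue directly on the cells; and note the small wording slip that the distortion is \emph{increasing}, not decreasing, in $b_j^2$.)
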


\begin{proof}
Let $\ga_n:=\set{(a_j, b_j): 1\leq j\leq n}$, as given in the statement of the lemma, be an optimal set of $n$-points. Take any $(a_q, b_q)\in \ga_n$. Since $\ga_n\ci \mathop{\uu}\limits_{j=1}^n S_j$, we can assume that $(a_q, b_q) \in S_t$, i.e., $b_q=\frac 1 t$ for some $1\leq t\leq n$. Since the Voronoi region of $(a_q, b_q)$, i.e., $M((a_q, b_q)|\ga_n)$ has positive probability, $M((a_q, b_q)|\ga_n)$ contains some basic intervals from $J$ that generates the Cantor set $C$. Let $\set{J_{\gs^{(j)}} :  j\in \gL}$, where $\gL$ is an index set, be the family of all basic intervals that are contained in $M((a_q, b_q)|\ga_n)$. Now, the distortion error contributed by $(a_q, b_q)$ in its Voronoi region $M((a_q, b_q)|\ga_n)$ is given by
\begin{align*}
&\int_{M((a_q, b_q)|\ga_n)}\rho(x, (a_q, b_q)) \,dP=\sum_{j\in \gL} \frac 1{2^{\ell(\gs^{(j)})}} \int_{J_{\gs^{(j)}}}\rho(x, (a_q, b_q)) \,d(P\circ T_{\gs^{(j)}}^{-1})\\
&=\sum_{j\in \gL} \frac 1{2^{\ell(\gs^{(j)})}} \frac 1{9^{\ell(\gs^{(j)})}}V+\sum_{j\in \gL} \frac 1{2^{\ell(\gs^{(j)})}}\rho(T_{\gs^{(j)}} (\frac 12), (a_q, \frac 1 t))\\
&=\sum_{j\in \gL} \frac 1{2^{\ell(\gs^{(j)})}} \frac 1{9^{\ell(\gs^{(j)})}}V+\sum_{j\in \gL} \frac 1{2^{\ell(\gs^{(j)})}}\Big((T_{\gs^{(j)}} (\frac 12)-a_q)^2+ \frac 1{t^2}\Big)\\
&=\sum_{j\in \gL} \frac 1{2^{\ell(\gs^{(j)})}} \frac 1{9^{\ell(\gs^{(j)})}}V+\sum_{j\in \gL} \frac 1{2^{\ell(\gs^{(j)})}}(T_{\gs^{(j)}} (\frac 12)-a_q)^2+\sum_{j\in \gL} \frac 1{2^{\ell(\gs^{(j)})}} \frac 1{t^2}.
\end{align*}
The above expression is minimum if both $\sum_{j\in \gL} \frac 1{2^{\ell(\gs^{(j)})}}(T_{\gs^{(j)}} (\frac 12)-a_q)^2$ and \\$\sum_{j\in \gL} \frac 1{2^{\ell(\gs^{(j)})}}\frac 1{t^2}$ are minimum, i.e., when
\[a_q=\frac {\sum_{j\in \gL} \frac 1{2^{\ell(\gs^{(j)})}}T_{\gs^{(j)}} (\frac 12)}{\sum_{j\in \gL} \frac 1{2^{\ell(\gs^{(j)})}}}=E(X :X\in \pi(M((a_q, b_q)|\ga_n))) \te{ and } b_q=\frac 1 t=\frac 1 n.\]
Since $(a_q, b_q)\in \ga_n$ is chosen arbitrarily, the proof of the lemma is complete.  
\end{proof}

\begin{remark}\label{rem1}
Let $\ga_n\ci \mathop{\uu}\limits_{j=1}^n S_j$ be an optimal set of $n$-points for $P$ such that
\[\ga_n:=\set{(a_j, b_j) : 1\leq j\leq n},\]
where $a_1<a_2<a_3<\cdots<a_n$. Then, by Lemma \ref{lemma0}, we can deduce that $0<a_1<\cdots<a_n<1$.
\end{remark}

\begin{remark}\label{rem111}
Lemma \ref{lemma0} implies that if $\ga_n$ is an optimal set of $n$-points for $P$, then $\ga_n\ci S_n$ for all $n\in \D N$.
\end{remark}

\begin{prop} \label{prop31}
The set $\set{(\frac 16, \frac 12), (\frac 56, \frac 12)}$ forms an optimal set of two-points with constrained quantization error $V_2=\frac{19}{72}$.
\end{prop}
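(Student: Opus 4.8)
The plan is to combine the structural facts already established with a reduction to the ordinary (unconstrained) two-point problem for $P$. By Remark~\ref{rem111} an optimal set of two-points lies in $S_2$, so write it as $\ga_2=\set{(a_1,\frac12),(a_2,\frac12)}$ with $0<a_1<a_2<1$ (Remark~\ref{rem1}). Since the two elements share the second coordinate $\frac12$, one has $\rho(x,(a_j,\frac12))=(x-a_j)^2+\frac14$ for every $x$, whence
\[ V(P;\ga_2)=\frac14+\int\min_{j\in\set{1,2}}(x-a_j)^2\,dP, \]
and the common boundary of the two Voronoi regions is the vertical line $x=m$ with $m:=\frac{a_1+a_2}{2}$, so that $\pi(M((a_1,\frac12)\mid\ga_2))=(-\infty,m]$, $\pi(M((a_2,\frac12)\mid\ga_2))=[m,\infty)$, and in particular $0<m<1$. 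Because $\set{(\frac16,\frac12),(\frac56,\frac12)}\ci S_2$ has distortion error $\frac{19}{72}$ by Proposition~\ref{prop0}, we already have $V_2\le\frac{19}{72}$; and since $V_1=\frac98>\frac{19}{72}$, an optimal set must have exactly two elements.

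Next I would minimize the integral. For a fixed partition point $m$, Lemma~\ref{lemma0} forces $a_1=E(X:X\le m)$ and $a_2=E(X:X\ge m)$; writing $p:=P(X\le m)$ and using $E(X)=\frac12$, $V(X)=\frac18$ (Lemma~\ref{lemma2}), the between/within-class variance decomposition shows that for any $a_1,a_2$ with that partition point,
\[ \int\min_{j}(x-a_j)^2\,dP\ \ge\ \frac18-\frac{p}{1-p}\bigl(\frac12-E(X:X\le m)\bigr)^2\ =:\ D(m), \]
with equality precisely when $a_1,a_2$ are the two conditional means. When $m\in[\frac13,\frac23]$ the line $x=m$ separates $J_1\cap C$ from $J_2\cap C$, so $p=\frac12$, the conditional means are $a(1)=\frac16$ and $a(2)=\frac56$, and $D(m)=\frac18-\frac19=\frac1{72}$ (equivalently $\int_{J_1}(x-\frac16)^2\,dP+\int_{J_2}(x-\frac56)^2\,dP=\frac2{144}$ via Lemmas~\ref{lemma1} and~\ref{lemma2}).

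It remains to show $D(m)>\frac1{72}$ for $m\notin[\frac13,\frac23]$; by the reflection symmetry of $C$ about $\frac12$ it suffices to treat $m\in(0,\frac13)$. Then the left Voronoi cell meets $C$ only inside $J_1$, so the self-similarity $P|_{J_1}=\frac12\,P\circ T_1^{-1}$ gives $p=\frac12 q$ and $E(X:X\le m)=\frac13\mu$, where $q:=P(X\le t)$, $\mu:=E(X:X\le t)$ and $t:=3m\in(0,1)$; hence $\frac{p}{1-p}\bigl(\frac12-E(X:X\le m)\bigr)^2=\frac{q}{2-q}\bigl(\frac12-\frac\mu3\bigr)^2=:h(t)$, and the claim becomes $h(t)<\frac19$ on $(0,1)$. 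I would verify this by a case split on $q$: if $q\le\frac12$, the bounds $\bigl(\frac12-\frac\mu3\bigr)^2\le\frac14$ (from $0\le\mu<1$) and $\frac{q}{2-q}\le\frac13$ give $h(t)\le\frac1{12}<\frac19$; if $q>\frac12$, the identity $q\mu+(1-q)E(X:X>t)=\frac12$ with $E(X:X>t)\le1$ yields $\mu\ge1-\frac1{2q}>0$, hence $0<\frac12-\frac\mu3\le\frac{q+1}{6q}$ and $h(t)\le\frac{(q+1)^2}{36\,q(2-q)}<\frac19$, the last inequality because $(q+1)^2<4q(2-q)\iff(5q-1)(q-1)<0$, valid for $\frac12<q<1$. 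Granting this, every admissible $\ga_2$ satisfies $V(P;\ga_2)\ge\frac14+\frac1{72}=\frac{19}{72}$, so $V_2=\frac{19}{72}$ by the upper bound; moreover equality forces $m\in[\frac13,\frac23]$ and $(a_1,a_2)$ to be the conditional means, i.e.\ $a_1=\frac16$, $a_2=\frac56$. Hence $\set{(\frac16,\frac12),(\frac56,\frac12)}$ is the unique optimal set of two-points. The step I expect to be the real obstacle is precisely this uniform estimate — equivalently, the uniqueness of the optimal two-means $\set{\frac16,\frac56}$ of the Cantor distribution — which one could instead quote, along with $V_2^{\mathrm{unc}}=\frac1{72}$, directly from Graf and Luschgy~\cite{GL2}.
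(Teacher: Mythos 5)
Your proposal is correct, but it follows a genuinely different route from the paper. The paper proves Proposition~\ref{prop31} by brute force: it takes $\ga=\set{(a_1,\frac12),(a_2,\frac12)}$, splits into cases according to where $a_1$ lies ($a_1\ge\frac79$, $\frac23\le a_1<\frac79$, $\frac13\le a_1<\frac23$), computes an explicit numerical lower bound for the distortion in each case that exceeds $\frac{19}{72}$, concludes that $U_2(\ga)$ must meet both $J_1$ and $J_2$, and then reads off $a_1=a(1)=\frac16$, $a_2=a(2)=\frac56$ from the centroid condition. You instead exploit the observation that both points share the ordinate $\frac12$, so $\rho(x,(a_j,\frac12))=(x-a_j)^2+\frac14$ and the constrained problem is exactly the unconstrained two-means problem for $P$ shifted by the constant $\frac14$; you then solve the unconstrained problem via the between/within variance decomposition $D(m)=\frac18-\frac{p}{1-p}(\frac12-E(X:X\le m))^2$ together with a self-similarity rescaling to rule out partition points $m\notin[\frac13,\frac23]$. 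I checked your estimates ($h(t)\le\frac1{12}$ for $q\le\frac12$; $(q+1)^2<4q(2-q)\iff(5q-1)(q-1)<0$ for $q>\frac12$) and they are correct, as is the endpoint arithmetic $\frac14+\frac1{72}=\frac{19}{72}$; your appeal to Lemma~\ref{lemma0} and Remark~\ref{rem111} to place $\ga_2\ci S_2$ is legitimate since those precede the proposition. What your approach buys is a cleaner, non-numerical argument that also yields uniqueness and makes transparent the relation $V_2=\frac14+V_2^{\mathrm{unc}}$ (and, as you note, the unconstrained step could simply be quoted from Graf--Luschgy). What the paper's approach buys is uniformity of method: the same case-splitting template is reused in Lemma~\ref{lemma31} and Proposition~\ref{prop32} to establish the ``no element in the middle gap'' property for all $n$, which is the structural input for the induction in Theorem~\ref{theo1}.
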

\begin{proof}
Due to symmetry, the distortion error due to the set $\gb:=\set{(\frac 16, \frac 12), (\frac 56, \frac 12)}$ is given by
\[V(P; \gb)= 2\int_{J_1}\rho(x, (\frac 16, \frac 12))\,dP=\frac{19}{72}.\]
Let $\ga:=\set{(a_1, \frac 12), (a_2, \frac 12)}$, where $0<a_1<a_2<1$, be an optimal set of two-points. Since $V_2$ is the constrained quantization error for two-points, we have $V_2\leq V(P; \gb)=\frac{19}{72}$. We first show that $U_2(\ga)\ii J_1\neq \es$. Suppose that $T_{21}(1)=\frac 79\leq a_1$. Then,
\[V_2>\int_{J_1}\rho(x, (\frac 79, \frac 12))\,dP=\frac{413}{1296}>V_2,\]
which leads to a contradiction. Suppose that $\frac 23\leq a_1<\frac 79$. Then, the Voronoi region of $(a_1, \frac 12)$ does not contain any element from $J_{22}$. For the sake of contradiction, assume that the Voronoi region of $(a_1, \frac 12)$ contains elements from $J_{22}$. Then, we must have $\frac 12(a_1+a_2)>\frac 89$ implying $a_2>\frac {16}9-a_1>\frac {16}9-\frac {7}9=1$, which gives a contradiction. Thus, we see that $J_{22}$ is contained in the Voronoi region of $(a_2, \frac 12)$. Hence,
\[V_2>\int_{J_1}\rho(x, (\frac 23, \frac 12))\,dP+\int_{J_{22}}\rho(x, (a(22), \frac 12))\,dP=\frac{829}{2592}>V_2,\]
which gives a contradiction. Assume that $\frac 13\leq a_1<\frac 23$. Then, the distortion error is obtained as
\[V_2\geq \int_{J_1}\rho(x, (\frac 13, \frac 12))\,dP+\int_{J_{21}}\rho(x, (\frac 23, \frac 12))\,dP+\int_{J_{22}}\rho(x, (a(22), \frac 12))\,dP=\frac{353}{1296}>V_2,\]
which yields a contradiction. Hence, we can assume that $a_1<\frac 13$, i.e., $U_2(\ga)\ii J_1\neq \es$. Similarly, we can show that $\frac 23<a_2$, i.e., $U_2(\ga)\ii J_2\neq \es$. Hence, $U_2(\ga)\ii J_j\neq \es$ for $j=1, 2$. Notice that then the Voronoi region of $(a_1, \frac 12)$ does not contain any element from $J_2$, and the Voronoi region of $(a_2, \frac 12)$ does not contain any element from $J_1$ yielding
\[(a_1,\frac 12)=(a(1), \frac 12)=(\frac 16, \frac 12), \te{ and } (a_2,\frac 12)=(a(2), \frac 12)=(\frac 56, \frac 12),\]
with constrained quantization error $V_2=\frac{19}{72}$. Thus, the proof of the proposition is complete.
\end{proof}

\begin{lemma} \label{lemma31}
Let $\ga_3$ be an optimal set of three-points. Then, $U_3(\ga_3)\ii J_j\neq \es$ for $j=1, 2$, and $U_3(\ga_3)$ does not contain any element from the open line segment joining $(\frac 13, 0)$ and $(\frac 23, 0)$.
\end{lemma}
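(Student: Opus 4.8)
The plan is to mimic the structure of the proof of Proposition~\ref{prop31}, but now with three points available. By Lemma~\ref{lemma0} and Remark~\ref{rem111} we know any optimal set $\ga_3$ lies in $S_3$, so write $\ga_3=\set{(a_1,\frac13),(a_2,\frac13),(a_3,\frac13)}$ with $0<a_1<a_2<a_3<1$. First I would produce an explicit competitor to get an upper bound on $V_3$: the natural candidate is the set $\ga_3(I)$ from Definition~\ref{defi1} with $\ell(3)=1$ and $I=\set{1}$ (or $I=\set{2}$), i.e.\ $\set{(a(11),\frac13),(a(12),\frac13),(a(2),\frac13)}$, whose distortion error is computed from Proposition~\ref{prop0} to be $\frac1{18}V\bigl(2^2-3+\frac19(3-2)\bigr)+\frac19=\frac1{18}\cdot\frac18\cdot\frac{10}{9}+\frac19$. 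This gives a concrete numerical value $V_3\le$ (some explicit fraction), which will be the yardstick against which all the "bad configuration" distortion errors are compared.

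Next I would show $U_3(\ga_3)\ii J_1\neq\es$ by ruling out $a_1\ge\frac13$, exactly as in Proposition~\ref{prop31}: if $a_1$ is too far right, then the interval $J_1$ (or a piece of it) is entirely in the Voronoi region of $(a_1,\frac13)$ and the resulting distortion error, bounded below by $\int_{J_1}\rho(x,(\frac13,\frac13))\,dP$ plus whatever $J_2$ contributes via its best possible point, exceeds the upper bound on $V_3$. One should break into the subcases $a_1\ge\frac23$, $\frac13\le a_1<\frac23$ just as before, using Lemma~\ref{lemma2} to evaluate each $\int_{J_\gs}\rho(x,(c,\frac13))\,dP=\frac1{2^{|\gs|}}\bigl(\frac1{9^{|\gs|}}V+(a(\gs)-c)^2\bigr)+\frac19$-type integral, and in each case comparing against the upper bound to reach a contradiction; where a Voronoi boundary argument is needed (an interior point $c$ cannot "reach" a neighboring sub-interval unless $a_2$ or $a_3$ is forced outside $[0,1]$), I would reproduce the perpendicular-bisector estimate $\frac12(a_1+a_2)\le$ (endpoint) used in the two-point proof. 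By the mirror symmetry $x\mapsto 1-x$ of $P$, the same argument gives $a_3>\frac23$, hence $U_3(\ga_3)\ii J_2\neq\es$.

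Finally, for the statement that $U_3(\ga_3)$ has no element in the open segment joining $(\frac13,0)$ and $(\frac23,0)$ — i.e.\ no $a_j\in(\frac13,\frac23)$ — the only candidate is $a_2$, since $a_1<\frac13$ and $a_3>\frac23$ are already established. I would assume $\frac13<a_2<\frac23$ for contradiction. Then $J_1$ and $J_2$ together must be covered by the three Voronoi regions, with the middle point $(a_2,\frac13)$ sitting over the empty gap $(\frac13,\frac23)$; its Voronoi region can capture at most a bounded piece of $J_1$ (namely the portion right of the bisector with $a_1$) and a bounded piece of $J_2$. The cleanest route is to note that since $P(J_1)=P(J_2)=\frac12$ and there are only three points, one of the two intervals $J_1,J_2$ is served by exactly one of the $a_j$'s plus possibly a sliver from $a_2$; bounding below the distortion on that side by the one-point-per-interval cost and on the other side appropriately, the total again beats the competitor bound. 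The main obstacle I anticipate is precisely this last step: making the "sliver" estimate rigorous requires carefully locating where the bisector between $a_2$ and its neighbors can fall, since $a_1,a_3$ are not yet pinned down — one wants to argue that wherever they are, the configuration is dominated by the honest $\ga_3(I)$ configurations, and packaging that domination without an exhaustive case split is the delicate part. I would handle it by showing that given $a_2\in(\frac13,\frac23)$ fixed, the error is minimized by pushing $a_1\to a(1)$, $a_3\to a(2)$ and optimizing $a_2$, then checking that even this best "bad" value exceeds $V_3$.
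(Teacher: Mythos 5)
Your overall strategy is the paper's: compute the competitor value $V(P;\gb)=\frac{77}{648}$ from Proposition~\ref{prop0}, then derive contradictions from explicit distortion lower bounds combined with perpendicular-bisector constraints, and use the reflection $x\mapsto 1-x$ for the mirror statement. The difficulty is that the margins are razor-thin: the slack above the unavoidable height penalty is $\frac{77}{648}-\frac 19=\frac{5}{648}$, and in the middle-segment step the paper's contradiction is $\frac{155}{1296}>\frac{154}{1296}$, a margin of $\frac{1}{1296}$. Two of your steps, as described, do not survive this.

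First, in ruling out $U_3(\ga_3)\ii J_1=\es$ you bound the error below by $\int_{J_1}\rho(x,(\frac 13,\frac 13))\,dP$ ``plus whatever $J_2$ contributes via its best possible point.'' In that regime all three elements of $\ga_3$ may lie over $[\frac 13,1]$, so $J_2$ can be served by two or even three of them, and the one-point optimum on $J_2$ is \emph{not} a lower bound for its contribution. You must either use the three-point optimal distortion on $J_2$ (namely $\frac 1{18}\cdot\frac{5}{648}+\frac 1{18}$, which does still close the gap: $\frac{11}{144}+\frac{5}{11664}+\frac 1{18}=\frac{1544}{11664}>\frac{1386}{11664}=\frac{77}{648}$), or refine the case split as the paper does with the thresholds $\frac{211}{420}$ and $\frac{11}{27}$ and track, via the bisector inequalities, which sub-intervals of $J_2$ are forced to be far from every point. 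Second, for $a_2\in(\frac 13,\frac 23)$ your ``cleanest route'' of pushing $a_1\to a(1)$ and $a_3\to a(2)$ and optimizing $a_2$ is not a valid relaxation: by Lemma~\ref{lemma0} the conditionally optimal $a_1$ is the centroid of $\pi(M((a_1,\frac 13)|\ga_3))$, which is strictly less than $a(1)$ whenever $a_2$'s Voronoi region bites into $J_1$, and the distortion on $J_1$ is then shared between $a_1$ and $a_2$; replacing it by the one-point cost at $a(1)$ needs an argument, and any genuine loss in the relaxation would erase the $\frac 1{1296}$ margin. The paper instead splits on $a_2\ge\frac 12$ versus $a_2<\frac 12$, uses the bisector condition to force $a_1<\frac 16$, and only then sums explicit lower bounds over $J_1$, $J_{21}$, $J_{22}$. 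Some such localization of $a_1$ and $a_3$ before the comparison is unavoidable.
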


\begin{proof}
The distortion error due to the set $\gb:=\set{(a(11), \frac 13), (a(12), \frac 13), (a(2), \frac 13)}$ is given by
\[V(P; \gb)=\int\min_{a\in \ga_3} \rho(x, a)\,dP=\frac{77}{648}.\]
Let $\ga_3:=\set{(a_1, \frac 13), (a_2, \frac 13), (a_3, \frac 13)}$ be an optimal set of three-points such that $0<a_1<a_2<a_3<1$. Since $V_3$ is the constrained quantization error for three-points, we have $V_3\leq \frac{77}{648}$. Let us first show that $U_3(\ga_3)\ii J_1\neq \es$, i.e., $\frac 13<a_1$. We prove it by contradiction. Notice that 
\[\int_{J_1}\rho(x, (a, \frac 13))>\frac{77}{648},\]
if $a>\frac{1}{36} \big(\sqrt{146}+6\big)$. Choose a number $\frac{211}{420}>\frac{1}{36} \big(\sqrt{146}+6\big)$, and consider the following cases: 

\tit{Case~1. $\frac{211}{420}\leq a_1$.}

 Then, \[V_3\geq \int_{J_1}\rho(x, (\frac{211}{420}, \frac 13)) \,dP=\frac{4659}{39200}>V_3,\]
 which gives a contradiction.

\tit{Case~2. $\frac{11}{27} \leq a_1<\frac{211}{420}$.}

Then, we have
$\frac 12(a_1+a_2)>\frac 23$ implying $a_2>\frac 43-a_1>\frac 43-\frac{211}{420}=\frac{349}{420}>T_{21} (1)$. Hence,
\[V_3\geq \int_{J_1}\rho(x, (\frac{11}{27}, \frac 13)) \,dP+\int_{J_{21}}\rho(x, (\frac{349}{420}, \frac 13)) \,dP=\frac{7006871}{57153600}>V_3,\]
which leads to a contradiction.

\tit{Case~3. $\frac 13< a_1<\frac{11}{27}$.}

Then, we have
$\frac 12(a_1+a_2)>\frac 23$ implying $a_2>\frac 43-a_1>\frac 43-\frac{11}{27}=\frac{25}{27}=T_{221} (1)$. Hence,
\[V_3\geq \int_{J_1}\rho(x, (\frac 13, \frac 13)) \,dP+\int_{J_{21}\uu J_{221}}\rho(x, (\frac{25}{27}, \frac 13)) \,dP=\frac{6013}{46656}>V_3,\]
which gives a contradiction.

Taking into account the above cases, we see that a contradiction arises. Hence, $U_3(\ga_3)\ii J_1\neq \es$. Reflecting the above arguments with respect to the line $x=\frac 12$, we can show that $U_3(\ga_3)\ii J_2\neq \es$. Thus, $U_3(\ga_3)\ii J_j\neq \es$ for $j=1, 2$. We now show that $U_3(\ga_3)$ does not contain any element from the open line segment joining $(\frac 13, 0)$ and $(\frac 23, 0)$. For the sake of contradiction, assume that $U_3(\ga_3)$ contains an element from the open line segment joining $(\frac 13, 0)$ and $(\frac 23, 0)$. Since $U_3(\ga_3)\ii J_j\neq \es$ for $j=1, 2$, we must have $\frac 13<a_2<\frac 23$. The following cases can happen:

\tit{Case~I. $\frac 12\leq a_2<\frac 23$.}

Then, we have $\frac 12(a_1+a_2)<\frac 13$ implying $a_1<\frac 23-a_2\leq \frac 16$. Again, $E( X : X\in J_1)=\frac 16$. Hence,
\begin{align*}
V_3&=\int_{J_1}\min_{a\in\set{a_1, a_2}}\rho(x, a)\,dP+\int_{J_{21}\uu J_{22}}\min_{a\in\set{a_2, a_3}}\rho(x, a)\,dP\\
&=\int_{J_1} \rho(x, (\frac 16, \frac 13))\,dP+\int_{J_{21}} \rho(x, (\frac 23, \frac 13))\,dP+\int_{J_{22}}\rho(x, (a(22), \frac 13))\,dP\\
&=\frac{155}{1296}>V_3,
\end{align*}
which yields a contradiction.

\tit{Case~II. $\frac 13<a_2<\frac12$.}

This case is the reflection of Case~I with respect to the line $x=\frac 12$. Hence, a contradiction also arises in this case.

Considering Case~I and Case~II, we can deduce that $U_3(\ga_3)$ does not contain any element from the open line segment joining $(\frac 13, 0)$ and $(\frac 23, 0)$.
Thus, the proof of the lemma is complete.
\end{proof}

\begin{prop} \label{prop32}
Let $\ga_n$ be an optimal set of $n$-points for all $n\geq 2$. Then, $U_n(\ga_n)\ii J_j\neq \es$ for $j=1, 2$, and $U_n(\ga_n)$ does not contain any element from the open line segment joining $(\frac 13, 0)$ and $(\frac 23, 0)$.
\end{prop}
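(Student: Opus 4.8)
My plan is to exploit that, by Remark~\ref{rem111}, an optimal set $\ga_n$ of $n$-points lies in $S_n$, so I may write $\ga_n=\set{(a_j,\frac 1n):1\le j\le n}$ with $0<a_1<\cdots<a_n<1$. For a finite $\gb\ci[0,1]$ with $\te{card}(\gb)\le n$, lifting $\gb$ to height $\frac 1n$ gives $V\big(P;\set{(b,\frac 1n):b\in\gb}\big)=\int\min_{b\in\gb}(x-b)^2\,dP+\frac 1{n^2}$, and since every point of $\UU_{j=1}^n S_j$ has height at least $\frac 1n$, I get $V_n=W_n+\frac 1{n^2}$, where $W_n$ is the $n$th (unconstrained) quantization error of $P$ as computed by Graf and Luschgy in \cite{GL2}; moreover $\ga_n$ is constrained-optimal if and only if $U_n(\ga_n)$ is an optimal set of $n$-means for $P$. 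The proposition is then exactly the structural fact about optimal $n$-means for the Cantor distribution established in \cite{GL2}: every optimal element is a centroid $a(\gs)$ of some basic interval $J_\gs$, hence lies in $J_1\uu J_2$ and not on the open segment between $(\frac 13,0)$ and $(\frac 23,0)$. (The same identity $V_n=W_n+\frac 1{n^2}$ also yields the optimal sets and the $n$th constrained quantization errors directly.)

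\textbf{Self-contained alternative, in the style of Proposition~\ref{prop31} and Lemma~\ref{lemma31}.} If one prefers to avoid importing the structure theorem from \cite{GL2}, I would argue by strong induction on $n$, with $n=2,3$ being Proposition~\ref{prop31} and Lemma~\ref{lemma31}. The first ingredient is the upper bound: for any admissible $I$, Proposition~\ref{prop0} together with $2^{\ell(n)+1}-n+\frac 19(n-2^{\ell(n)})=2^{\ell(n)}-\frac 89(n-2^{\ell(n)})\le 2^{\ell(n)}$ gives
\[V_n\le V(P;\ga_n(I))\le \frac{V}{9^{\ell(n)}}+\frac 1{n^2}\le \frac 1{72}+\frac 1{n^2}.\]
For the first assertion I would argue by contradiction: if $a_1\ge\frac 13$ then every element of $\ga_n$ has first coordinate $\ge\frac 13$, so for $x\in J_1\ci[0,\frac 13]$ the nearest element lies at squared distance at least $(\frac 13-x)^2+\frac 1{n^2}$; integrating over $J_1$ and adding the contribution $\frac 1{2n^2}$ from $J_2$ yields $V_n\ge\int_{J_1}\big((\frac 13-x)^2+\frac 1{n^2}\big)\,dP+\frac 1{2n^2}=\frac 1{48}+\frac 1{n^2}>\frac 1{72}+\frac 1{n^2}$, contradicting the upper bound. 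Hence $a_1<\frac 13$, i.e. $U_n(\ga_n)\ii J_1\ne\es$, and reflecting about $x=\frac 12$ gives $U_n(\ga_n)\ii J_2\ne\es$.

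For the second assertion I would suppose $a_j\in(\frac 13,\frac 23)$ for some $j$ and seek a contradiction. First, by Lemma~\ref{lemma0}, if the Voronoi region $M((a_j,\frac 1n)|\ga_n)$ missed $J_2$ its mass would lie in $[0,\frac 13]$, forcing $a_j\le\frac 13$; so this region meets both $J_1$ and $J_2$. From this I would deduce that there is exactly one gap index: letting $p$ be the largest index with $a_p<\frac 13$, the gap element must be $a_{p+1}$, with $\frac 12(a_p+a_{p+1})<\frac 13$ and $\frac 12(a_{p+1}+a_{p+2})>\frac 23$, so $a_p<\frac 13<a_{p+1}<\frac 23<a_{p+2}$; in particular $a_{p+1}$ alone serves the part of $J_1$ in $[\frac 12(a_p+a_{p+1}),\frac 13]$ and the part of $J_2$ in $[\frac 23,\frac 12(a_{p+1}+a_{p+2})]$. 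I would then finish by a contradiction with optimality, either (i) estimating $\int_{M((a_{p+1},\frac 1n)|\ga_n)}\rho(x,(a_{p+1},\frac 1n))\,dP$ from below --- using $x<a_{p+1}$ on the $J_1$-part and $x>a_{p+1}$ on the $J_2$-part --- together with the contributions of the neighbouring Voronoi regions and a finite case analysis on the positions of $\frac 12(a_p+a_{p+1})$ and $\frac 12(a_{p+1}+a_{p+2})$ relative to $J_{12}$, $J_{21}$ and their descendants, mimicking the proof of Lemma~\ref{lemma31}, or (ii) producing a strictly smaller competitor by relocating the poorly used element $a_{p+1}$ into $J_1$ or $J_2$, controlling the distortion of the resulting sub-configurations on $J_1$ and $J_2$ via self-similarity and the induction hypothesis.

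\textbf{Where I expect the difficulty.} The first assertion is a one-line comparison, but the second is the crux. The temptation is to count $a_{p+1}$ as a full element for $J_1$ and again for $J_2$; this is too weak, as it only gives a distortion bound $\frac 1{18}(W_{p+1}+W_{n-p})$ with $(p+1)+(n-p)=n+1$, which already fails for $n=4$ since $\frac 1{18}(W_2+W_3)<W_4$. So one must account for the gap element carrying only a small fraction of the mass of each of $J_1$, $J_2$; the genuinely delicate sub-case is when $\frac 12(a_p+a_{p+1})$ is very close to $\frac 13$, so that $a_{p+1}$'s region barely enters $J_1$ and its own wasted distortion is tiny, and one must bring in the neighbouring elements --- or, equivalently, the inductive control of the $J_1$- and $J_2$-pieces --- to reach the contradiction. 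The reduction described first avoids all of this.
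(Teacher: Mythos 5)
Your main reduction is correct and yields a complete proof, but by a genuinely different route from the paper's. You observe that every admissible codebook for the $n$-point problem lies at height at least $\frac 1n$, so the vertical and horizontal contributions to the distortion separate: $V(P;\ga)\geq \int\min_{a\in\pi(\ga)}(x-a)^2\,dP+\frac 1{n^2}\geq W_n+\frac 1{n^2}$ for every admissible $\ga$, with equality attained by lifting a Graf--Luschgy optimal set of $n$-means to $S_n$. Hence $V_n=W_n+\frac 1{n^2}$, any constrained-optimal $\ga_n$ must project to an optimal set of $n$-means of full cardinality (using $W_{n-1}>W_n$), and the proposition follows from the classification in \cite{GL2}: all optimal elements are midpoints $a(\gs)$ of basic intervals of level at least one, with both $J_1$ and $J_2$ represented for $n\geq 2$. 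The paper instead argues directly: it bounds $V_n$ for $n\geq 16$ by the competitor $\set{(a(\gs),\frac 1{16}):\gs\in\set{1,2}^4}$, rules out $U_n(\ga_n)\ii J_1=\es$ by one integral estimate, and eliminates a hypothetical gap element $a_k\in(\frac 13,\frac 23)$ through a three-case analysis ($a_k\in[\frac 59,\frac 23)$, $(\frac 49,\frac 59)$, $(\frac 13,\frac 49)$) with explicit estimates over $J_{12}$ and $J_{21}$, deferring $4\leq n\leq 15$ to ``a similar technique as Lemma~\ref{lemma31}''. Your route is shorter and uniform in $n$, at the price of importing the full structure theorem of \cite{GL2} (which would also render Corollary~\ref{cor32}, Lemma~\ref{lemma33} and Theorem~\ref{theo1} immediate); the paper's route keeps Section~\ref{sec1} largely self-contained, invoking \cite{GL2} only for one combinatorial inequality inside Theorem~\ref{theo1}. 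Two further remarks: your first-assertion estimate (lower bound $\frac 1{48}+\frac 1{n^2}$ against the upper bound $\frac V{9^{\ell(n)}}+\frac 1{n^2}\leq\frac 1{72}+\frac 1{n^2}$) is a clean version of the paper's argument that works for all $n\geq 2$ at once and is correct as written; on the other hand, the second assertion in your self-contained alternative is only a plan --- you correctly diagnose why naive double-counting of the gap element fails, but to make that route stand alone you would still have to execute the case analysis or exchange argument, exactly as the paper does in its Cases~1--3. Since your main reduction already covers it, this is not a gap in the submission.
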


\begin{proof}
For all $n\geq 2$, let us first prove that $U_n(\ga_n)\ii J_1\neq \es$. By Proposition \ref{prop31} and Lemma \ref{lemma31}, it is true for $n=2, 3$. Using a similar technique as Lemma \ref{lemma31}, we can also prove that it is true for any $n\geq 4$. However, here we give a general proof for all $n\geq 16$. The distortion error due to the set $\gb:=\set{(a(\gs), \frac 1{16}) : \gs \in \set{1,2}^4}$ is given by
\[V(P; \gb)=\int\min_{a\in\gb}\rho(x, a)\,dP=\frac{6593}{1679616}.\]
Since $V_n$ is the constrained quantization error for $n$-points with $n\geq 16$, and $V_n$ is a decreasing sequence, we have $V_n\leq V_{16}\leq \frac{6593}{1679616}$. Let $\ga_n:=\set{(a_j, \frac 1 n) : 1\leq j\leq n}$ be an optimal set of $n$-points such that $0<a_1<a_2<\cdots<a_n<1$.  For the sake of contradiction, assume that $U_n(\ga_n)\ii J_1= \es$. Then, $\frac 13<a_1$, and so,
\[V_n>\int_{J_1}\rho(x, (\frac 13, \frac 1 n)) \,dP=\frac{1}{2 n^2}+\frac{1}{48}\geq \frac{1}{48}>V_n,\]
which is a contradiction. Hence, we can assume that $U_n(\ga_n)\ii J_1\neq \es$. Similarly, we can show $U_n(\ga_n)\ii J_2\neq  \es$. Thus, the proof of the first part of the proposition is complete. We now show that $U_n(\ga_n)$ does not contain any element from the open line segment joining $(\frac 13, 0)$ and $(\frac 23, 0)$. For the sake of contradiction, assume that $U_n(\ga_n)$ contains an element from the open line segment joining $(\frac 13, 0)$ and $(\frac 23, 0)$.
Let
\[k=\min\set{j : a_j>\frac 13 \te{ for all } 1\leq j\leq n}.\]
Since $U_n(\ga_n)\ii J_1\neq \es$, we have $2\leq k$. Thus, we see that $a_{k-1}\leq \frac 13<a_k$. Again, recall that the Voronoi regions of the elements in an optimal set of $n$-points must have positive probability. Hence, we have $a_{k-1}\leq \frac 13<a_k<\frac 23\leq a_{k+1}$.

 The following cases can happen:

\tit{Case~1. $\frac 59 \leq a_k<\frac 23$.}

Then, $\frac 12(a_{k-1}+a_k)<\frac 13$ implying $a_{k-1}<\frac 23-a_k\leq \frac 23-\frac 59=\frac 19$. Hence,
 \[V_n\geq \int_{J_{12}}\rho(x, (\frac 19, \frac 1n)) \,dP=\frac{1}{4 n^2}+\frac{19}{2592}\geq \frac{19}{2592}>V_n,\]
 which gives a contradiction.

\tit{Case~2. $\frac 49<a_2<\frac 59$.}

Then,   $\frac 12(a_{k-1}+a_k)<\frac 13$ implying $a_{k-1}<\frac 23-a_k\leq \frac 23-\frac 49=\frac 29=T_{12} (0)$.
Again, $\frac 12(a_k+a_{k+1})>\frac 23$ implying $a_{k+1}>\frac 43-a_k>\frac 43-\frac 59=\frac79=T_{21} (1)$. Hence,
\[V_n\geq \int_{J_{12}}\rho(x, (\frac 29, \frac 1 n)) \,dP+\int_{J_{21}}\rho(x, (\frac 79, \frac 1n)) \,dP=\frac{1}{2 n^2}+\frac{11}{1296}\geq \frac{11}{1296}>V_n,\]
which leads to a contradiction.

\tit{Case~3. $\frac 13< a_k<\frac 49$.}

Then, $\frac 12(a_k+a_{k+1})>\frac 23$ implying $a_{k+1}>\frac 43-a_k>\frac 43-\frac 49=\frac 89$. Hence,
\[V_n\geq \int_{J_{21}}\rho(x, (\frac 89, \frac 1n)) \,dP=\frac{1}{4 n^2}+\frac{19}{2592}\geq \frac{19}{2592}>V_n,\]
 which gives a contradiction.

Considering the above all possible cases, we see that a contradiction arises. Hence, $U_n(\ga_n)$ does not contain any element from the open line segment joining $(\frac 13, 0)$ and $(\frac 23, 0)$. Thus, the proof of the proposition is complete.
\end{proof}

\begin{corollary} \label{cor32}
Let $n\geq 2$. Then, the Voronoi region of any element in $\ga_n\ii U_n^{-1}(J_1)$ does not contain any element from $J_2$, and the Voronoi region of any element in $\ga_n\ii U_n^{-1}(J_2)$ does not contain any element from $J_1$.
\end{corollary}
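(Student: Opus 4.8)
The plan is to reduce everything to a one-variable midpoint comparison. By Lemma~\ref{lemma0} together with Remark~\ref{rem111} we may write $\ga_n=\set{(a_j,\frac1n):1\le j\le n}$ with $0<a_1<a_2<\cdots<a_n<1$, the two outer strict inequalities being exactly Remark~\ref{rem1}. Since every element of $\ga_n$ sits on $S_n$, for any indices $i<j$ and any $x\in\D R$ we have the elementary identity
\[\rho(x,(a_i,\tfrac1n))-\rho(x,(a_j,\tfrac1n))=(x-a_i)^2-(x-a_j)^2=(a_j-a_i)(2x-a_i-a_j),\]
so $(x,0)$ is strictly closer to $(a_j,\frac1n)$ than to $(a_i,\frac1n)$ exactly when $x>\frac12(a_i+a_j)$, and strictly closer to $(a_i,\frac1n)$ exactly when $x<\frac12(a_i+a_j)$. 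This midpoint rule is the only computation needed.

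Next I would feed in Proposition~\ref{prop32}: $U_n(\ga_n)$ meets both $J_1$ and $J_2$ but contains no point of the open interval $(\frac13,\frac23)$. Setting $k:=\min\set{j:a_j\ge\frac23}$, this forces $2\le k\le n$ and $a_{k-1}\le\frac13<\frac23\le a_k$, and — since no $a_j$ lies strictly between $\frac13$ and $\frac23$ — it pins down the two blocks exactly: $\ga_n\ii U_n^{-1}(J_1)=\set{(a_j,\frac1n):1\le j\le k-1}$ and $\ga_n\ii U_n^{-1}(J_2)=\set{(a_j,\frac1n):k\le j\le n}$.

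For the first assertion, fix $i\le k-1$ and $x\in J_2$, i.e., $\frac23\le x\le1$. Using $a_i\le a_{k-1}\le\frac13$ and $a_k\le a_n<1$ we get $\frac12(a_i+a_k)\le\frac12(\frac13+a_k)<\frac12(\frac13+1)=\frac23\le x$, so by the midpoint rule $(x,0)$ is strictly closer to $(a_k,\frac1n)\in\ga_n$ than to $(a_i,\frac1n)$; hence $(x,0)\notin M((a_i,\frac1n)|\ga_n)$. As $x$ was an arbitrary point of $J_2$, the Voronoi region of $(a_i,\frac1n)$ contains no element of $J_2$. The second assertion is the mirror image: for $i\ge k$ and $x\in J_1$, i.e., $0\le x\le\frac13$, use $a_i\ge a_k\ge\frac23$ and $a_{k-1}\ge a_1>0$ to get $\frac12(a_{k-1}+a_i)\ge\frac12(a_{k-1}+\frac23)>\frac12(0+\frac23)=\frac13\ge x$, so $(x,0)$ is strictly closer to $(a_{k-1},\frac1n)\in\ga_n$ than to $(a_i,\frac1n)$, whence $(x,0)\notin M((a_i,\frac1n)|\ga_n)$.

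I do not expect a genuine obstacle: Proposition~\ref{prop32} already manufactures the separating gap, and the strict bounds $0<a_1$ and $a_n<1$ of Remark~\ref{rem1} are precisely what pushes each relevant midpoint $\frac12(a_i+a_j)$ (with $i\le k-1$ and $j\ge k$) strictly inside $(\frac13,\frac23)$ rather than merely onto its closure, so that every distance comparison above is strict and one never has to invoke $P((\tfrac13,\tfrac23))=0$. The only step that demands a little care is the bookkeeping identifying ``an element of $\ga_n\ii U_n^{-1}(J_1)$'' with an $(a_i,\frac1n)$ having $i\le k-1$ (and similarly for $J_2$), and this too follows at once from Proposition~\ref{prop32}.
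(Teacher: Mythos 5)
Your proof is correct and takes essentially the same route as the paper's: invoke Proposition~\ref{prop32} to get the two-block structure of $U_n(\ga_n)$ on either side of the gap $(\frac13,\frac23)$, then use the strict bounds $0<a_1$ and $a_n<1$ from Remark~\ref{rem1} to force the separating midpoint strictly inside that gap. The only cosmetic difference is that you run the midpoint comparison for every element of each block, whereas the paper argues by contradiction for just the two adjacent boundary elements (the rest following a fortiori); the underlying computation is identical.
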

\begin{proof}
Let $\ga_n:=\set{(a_j,\frac 1n) : 1\leq j\leq n}$ be an optimal set of $n$-points such that $0<a_1<a_2<\cdots<a_n<1$. By Proposition \ref{prop32}, we see that $U_n(\ga_n)$ contains elements from both $J_1$ and $J_2$, and does not contain any element from the open line segment joining $(\frac 13, 0)$ and $(\frac 23, 0)$.
Let
\[k=\max\set{j : a_j\leq \frac 13 \te{ for all } 1\leq j\leq n}.\]
Then, $a_k\leq \frac 13<\frac 23\leq a_{k+1}$. For the sake of contradiction, assume that the Voronoi region $(a_k, \frac 1 n)$ contains an element from $J_2$. Then, $\frac 12(a_k+a_{k+1})\geq \frac 23$ yielding $a_{k+1}\geq \frac 43-a_k\geq \frac 43-\frac 1 3=1$, which is a contradiction. Similarly, we can show that if the Voronoi region $(a_{k+1}, \frac 1 n)$ contains an element from $J_1$, then a contradiction arises.
Thus, the proof of the corollary is complete.
\end{proof}

\begin{lemma} \label{lemma33}
For $n\geq 2$ let $\ga_n$ be an optimal set of $n$-points. Set $\gb_1:=U_n(\ga_n)\ii J_1$, $\gb_2:=U_n(\ga_n)\ii J_2$, and $n_1:=\te{card}(\gb_1)$. Then, $U_{n_1}^{-1}(T_1^{-1}(\gb_1))$ is an optimal set of $n_1$-points, $U_{n-n_1}^{-1}(T_2^{-1}(\gb_2))$ is an optimal set of $n_2:=(n-n_1)$-points, and
\[V_n= \frac 1{18}\Big(V_{n_1}+V_{n-n_1}-\frac 1 {n_1^2} - \frac 1{(n-n_1)^2}\Big)+\frac 1 {n^2}.\]
\end{lemma}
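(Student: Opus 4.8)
The strategy is to use the geometric separation between $J_1$ and $J_2$ provided by Corollary~\ref{cor32} to split $V_n$ into a $J_1$-part and a $J_2$-part, and then to undo the similarities $T_1,T_2$ via the self-similarity of $P$ (Lemma~\ref{lemma1}). It is convenient to set, for $m\in\D N$,
\[W_m:=\min\Big\{\int\min_{c\in\gd}(x-c)^2\,dP\ :\ \gd\ci[0,1],\ 1\le\te{card}(\gd)\le m\Big\}.\]
The preliminary step is the elementary identity $V_m=W_m+\frac1{m^2}$ for all $m\in\D N$: for $\gd\ci[0,1]$ the set $U_m^{-1}(\gd)\ci S_m$ has distortion $\int\min_{c\in\gd}(x-c)^2\,dP+\frac1{m^2}$ because each of its points has second coordinate $\frac1m$, and by Remark~\ref{rem111} every optimal set of $m$-points lies in $S_m$, so the infimum defining $V_m$ is taken over precisely such sets. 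This identity is what converts the rescaled pieces below back into $V_{n_1}$ and $V_{n-n_1}$.

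I would then decompose $V_n$. By Proposition~\ref{prop32}, $\gb_1$ and $\gb_2$ are both nonempty, so $1\le n_1\le n-1$. Since $P$ is supported on $J_1\uu J_2$ and, by Corollary~\ref{cor32}, no point of $J_2$ lies in the Voronoi region of an element of $\ga_n\ii U_n^{-1}(J_1)$ while no point of $J_1$ lies in the Voronoi region of an element of $\ga_n\ii U_n^{-1}(J_2)$, the element of $\ga_n$ nearest to a point of $J_1$ always belongs to $U_n^{-1}(\gb_1)$ and the one nearest to a point of $J_2$ always belongs to $U_n^{-1}(\gb_2)$. Hence
\[V_n=\int_{J_1}\min_{a\in U_n^{-1}(\gb_1)}\rho(x,a)\,dP+\int_{J_2}\min_{a\in U_n^{-1}(\gb_2)}\rho(x,a)\,dP.\]
Applying Lemma~\ref{lemma1} with $k=1$ to the first integral replaces $\int_{J_1}(\cdot)\,dP$ by $\tfrac12\int(\cdot)\circ T_1\,dP$, and with $\gb_1=\set{(a_i,\tfrac1n)}$ the integrand turns into $\min_i(T_1(x)-a_i)^2+\frac1{n^2}=\frac19\min_i\big(x-T_1^{-1}(a_i)\big)^2+\frac1{n^2}$; so this term equals $\frac1{18}\int\min_{c\in T_1^{-1}(\gb_1)}(x-c)^2\,dP+\frac1{2n^2}$, and the $J_2$-term is treated the same way with $T_2^{-1}$. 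Adding,
\[V_n=\frac1{18}\Big(\int\min_{c\in T_1^{-1}(\gb_1)}(x-c)^2\,dP+\int\min_{c\in T_2^{-1}(\gb_2)}(x-c)^2\,dP\Big)+\frac1{n^2}.\]
As $T_1^{-1}(\gb_1)$ and $T_2^{-1}(\gb_2)$ are subsets of $[0,1]$ of cardinalities $n_1$ and $n-n_1$, the two integrals are at least $W_{n_1}$ and $W_{n-n_1}$, so $V_n\ge\frac1{18}\big(W_{n_1}+W_{n-n_1}\big)+\frac1{n^2}$.

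For the reverse inequality I would exhibit a competitor assembled from optimal sets on the two sides. Let $\ga_{n_1}^\ast$ and $\ga_{n-n_1}^\ast$ be optimal sets of $n_1$- and $(n-n_1)$-points; by Remark~\ref{rem111} they lie in $S_{n_1}$ and $S_{n-n_1}$, and by the preliminary identity $U_{n_1}(\ga_{n_1}^\ast)$ and $U_{n-n_1}(\ga_{n-n_1}^\ast)$ attain $W_{n_1}$ and $W_{n-n_1}$. The set $\gg:=U_n^{-1}\big(T_1(U_{n_1}(\ga_{n_1}^\ast))\uu T_2(U_{n-n_1}(\ga_{n-n_1}^\ast))\big)\ci S_n$ has at most $n$ points; bounding its distortion by restricting the integral to $J_1$ and to $J_2$ — legitimate since $P(J_1\uu J_2)=1$ and a minimum over a larger set is no larger — and then rescaling by $T_1^{-1}$ and $T_2^{-1}$ as above gives $V(P;\gg)\le\frac1{18}\big(W_{n_1}+W_{n-n_1}\big)+\frac1{n^2}$. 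Therefore $V_n=\frac1{18}\big(W_{n_1}+W_{n-n_1}\big)+\frac1{n^2}$, and comparing with the two-term formula for $V_n$ forces $\int\min_{c\in T_1^{-1}(\gb_1)}(x-c)^2\,dP=W_{n_1}$ and $\int\min_{c\in T_2^{-1}(\gb_2)}(x-c)^2\,dP=W_{n-n_1}$. Consequently $U_{n_1}^{-1}(T_1^{-1}(\gb_1))\ci S_{n_1}$ has distortion $W_{n_1}+\frac1{n_1^2}=V_{n_1}$ and is an optimal set of $n_1$-points, and likewise $U_{n-n_1}^{-1}(T_2^{-1}(\gb_2))$ is an optimal set of $(n-n_1)$-points. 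Substituting $W_{n_1}=V_{n_1}-\frac1{n_1^2}$ and $W_{n-n_1}=V_{n-n_1}-\frac1{(n-n_1)^2}$ gives the asserted identity. The step needing the most care is the decomposition of $V_n$ into the two one-sided integrals — it is exactly here that Corollary~\ref{cor32} is indispensable, ensuring that the points over $J_2$ are genuinely inactive on $J_1$ and conversely, so that rescaling by $T_1^{-1}$ (resp.\ $T_2^{-1}$) produces a bona fide $n_1$-point (resp.\ $(n-n_1)$-point) configuration on $[0,1]$.
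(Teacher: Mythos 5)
Your proof is correct and follows essentially the same route as the paper: split $V_n$ over $J_1$ and $J_2$ using the separation from Proposition~\ref{prop32} and Corollary~\ref{cor32}, rescale by $T_1^{-1},T_2^{-1}$ to pick up the factor $\frac1{18}$, and establish optimality of the two pieces by comparing against a competitor assembled from optimal $n_1$- and $(n-n_1)$-point sets. The only difference is organizational: you introduce the auxiliary quantity $W_m$ and argue via a direct two-sided inequality, whereas the paper reaches the same conclusion by assuming a piece is suboptimal and deriving the contradiction $V_n<V_n$.
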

\begin{proof} By Proposition \ref{prop32}, we have  $U_n(\ga_n)=\gb_1\uu \gb_2$. Proceeding in a similar way as Lemma \ref{lemma0}, we can show that
\[V_n(P)=\int\min_{a\in\ga_n}\rho(x, a)\,dP=\int \min_{a\in U_n(\ga_n)} \rho(x, (a, 0)) \,dP+\frac 1 {n^2}.\]
Hence,
{\footnotesize\begin{align*} 
V_n=&\int_{J_1} \min_{a\in \gb_1} \rho(x, (a,0))\,dP+\int_{J_2} \min_{a\in \gb_2} \rho(x, (a, 0))\,dP+\frac 1 {n^2}\\
=&\frac 12 \int  \min_{a \in T_1^{-1}(\gb_1)} \rho(T_1(x),(T_1(a), 0))\,dP+\frac 12 \int \min_{a\in T_2^{-1}(\gb_2)} \rho(T_2(x), (T_2(a), 0))\,dP+\frac 1 {n^2}\\
=&\frac 1{18} \int  \min_{a\in T_1^{-1}(\gb_1)} \rho(x, (a, 0))\,dP+\frac 1{18} \int \min_{a\in T_2^{-1}(\gb_2)} \rho(x, (a, 0))\,dP+\frac 1 {n^2}\\
=&\frac 1{18} \Big(\int  \min_{a \in T_1^{-1}(\gb_1)} \rho(x, (a, \frac 1{n_1}))\,dP+ \int \min_{a\in T_2^{-1}(\gb_2)} \rho(x, (a, \frac 1 {n_2}))\,dP-\frac 1 {n_1^2}-\frac 1 {n_2^2}\Big)+\frac 1 {n^2}
\end{align*}}
implying
{\footnotesize\begin{equation} \label{eqim1}
V_n=\frac 1{18}\Big(\int  \min_{a \in U_{n_1}^{-1}T_1^{-1}(\gb_1)} \rho(x, a)\,dP+ \int \min_{a\in U_{n_2}^{-1}T_2^{-1}(\gb_2)} \rho(x, a)\,dP-\frac 1 {n_1^2}-\frac 1 {n_2^2}\Big)+\frac 1 {n^2}.
\end{equation}}
If $U_{n_1}^{-1}T_1^{-1}(\gb_1)$ is not an optimal set of $n_1$-points for $P$, then there exists a set $\gg_1\sci S_{n_1}$ with $\te{card}(\gg_1)=n_1$ such that
$\int \min_{a\in \gg_1} \rho(x, a)\, dP<\int \min_{a\in U_{n_1}^{-1}T_1^{-1}(\gb_1)} \rho(x, a)\, dP$. But then, $\gd:=T_1U_{n_1}(\gg_1)\uu \gb_2$ is a set of cardinality $n$. $V_n$ being the constrained quantization error for $n$-points, we have
\begin{equation} \label{eqim2}
V_n\leq \int\min_{a\in \gd} \rho(x, (a, \frac  1 n)\,dP=\int\min_{a\in \gd} \rho(x, (a, 0))\,dP+\frac 1 {n^2}.
\end{equation}
Notice that
\begin{align}\label{eqim3}
 &\int_{J_1} \min_{a\in T_1U_{n_1}(\gg_1)} \rho(x, (a,0))\,dP \notag\\
  =&\frac 1{18} \int  \min_{a \in U_{n_1}(\gg_1)} \rho(x, (a, 0))\,dP \notag
 =\frac 1{18} \Big(\int  \min_{a \in U_{n_1}(\gg_1)} \rho(x, (a, \frac 1{n_1}))\,dP-\frac 1{n_1^2}\Big)\notag\\
 =&\frac 1{18}\Big( \int  \min_{a \in \gg_1} \rho(x, a)\,dP-\frac 1{n_1^2}\Big) 
 <\frac 1{18}\Big(\int \min_{a\in U_{n_1}^{-1}T_1^{-1}(\gb_1)} \rho(x, a)\, dP-\frac 1{n_1^2}\Big).
\end{align}
Hence, by \eqref{eqim1}, \eqref{eqim2}, and \eqref{eqim3}, we have
\begin{align*}
V_n&\leq \int_{J_1}\min_{a\in T_1U_{n_1}(\gg_1)} \rho(x, (a, 0))\,dP+\int_{J_2}\min_{a\in\gb_2} \rho(x, (a, 0))\,dP+\frac 1 {n^2}\\
&<\frac 1{18}\Big(\int \min_{a\in U_{n_1}^{-1}T_1^{-1}(\gb_1)} \rho(x, a)\, dP+ \int \min_{a\in U_{n_2}^{-1}T_2^{-1}(\gb_2)} \rho(x, a)\,dP-\frac 1 {n_1^2}-\frac 1 {n_2^2}\Big)+\frac 1 {n^2}\\
&=V_n,
\end{align*}
which leads to a contradiction. Hence, $U_{n_1}^{-1}(T_1^{-1}(\gb_1))$ is an optimal set of $n_1$-points. Similarly, we see that $U_{n-n_1}^{-1}(T_2^{-1}(\gb_2))$ is an optimal set of $n_2:=(n-n_1)$-points, and hence,
\[V_n= \frac 1{18}\Big(V_{n_1}+V_{n-n_1}-\frac 1 {n_1^2} - \frac 1{(n-n_1)^2}\Big)+\frac 1 {n^2}.\]
Thus, the proof of the lemma is complete.
\end{proof}

In view of Lemma \ref{lemma33}, we give the following example.
\begin{exam}
Because of Proposition \ref{prop32} and Corollary \ref{cor32}, we can show that if $\ga_n$ is an optimal set of $n$-points with constrained quantization error $V_n$, then
\begin{align*}
\ga_3=&\set{(a(11), \frac 13), (a(12), \frac 13), (a(2), \frac 13)} \te{ with } V_3=\frac{127}{864},\\
\ga_4=&\set{(a(11), \frac 14), (a(12), \frac 14), (a(21), \frac 14), (a(22),\frac 14)} \te{ with } V_4=\frac{83}{1296},\\
\ga_7=&\set{(a(111), \frac 17), (a(112), \frac 17), (a(121), \frac 17), (a(122), \frac 17), (a(211), \frac 17),  (a(212), \frac 1 7),\\
 &(a(22), \frac 17)},
 \end{align*}
with $V_7=\frac{1993}{95256}$. Here
\begin{align*} \gb_1& =U_7(\ga_7)\ii J_1=\set{a(111), a(112), a(121), a(122)} \te{ and } \\
\gb_2&=U_7(\ga_7)\ii J_2=\set{a(211),  a(212), a(22)},
\end{align*}
with $\te{card}(\gb_1)=4$ and $\te{card}(\gb_2)=3$. Notice that
\begin{align*} U_4^{-1}(T_1^{-1}(\gb_1))&=\set{(a(11), \frac 14),  (a(12),\frac 14),  (a(21), \frac 14),  (a(22), \frac 14)}, \\
U_3^{-1}(T_2^{-1}(\gb_2))&=\set{(a(11), \frac 13), (a(12), \frac 13), (a(2), \frac 13)}, and \\
V_7&=\frac 1{18}(V_4+V_3-\frac 1 {4^2}-\frac 1{3^2})+\frac 1{7^2}.
\end{align*}
\end{exam}

Let us state and prove the following theorem, which gives the optimal sets of $n$-points for all $n\geq 2$.
\begin{theorem}  \label{theo1} 
 Let $P=\frac 12 P\circ T_1^{-1}+\frac 12 P\circ T_2^{-1}$ be a unique Borel probability measure on $\D R$ with support the
Cantor set $C$ generated by the two contractive similarity mappings $T_1(x)=\frac 13 x$ and $T_2(x)=\frac 13 x+\frac 23$ for all $x\in \D R$.
Then, the set $\ga_n:=\ga_n(I)$ given by Definition \ref{defi1} forms an optimal set of $n$-points for $P$ with the corresponding constrained quantization error $V_n:=V(P;\ga_n(I))$,
where $V(P; \ga_n(I))$ is given by Proposition \ref{prop0}.
\end{theorem}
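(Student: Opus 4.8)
The plan is to prove the theorem by strong induction on $n$, using Lemma~\ref{lemma33} as the recursive engine. The base cases $n=1,2,3$ are already established (the single-point proposition, Proposition~\ref{prop31}, and the example computation of $\ga_3$), and these agree with $\ga_n(I)$ from Definition~\ref{defi1}. For the inductive step, fix $n\geq 4$ and let $\ga_n$ be any optimal set of $n$-points. By Proposition~\ref{prop32} and Corollary~\ref{cor32}, $U_n(\ga_n)=\gb_1\uu\gb_2$ with $\gb_i\ci J_i$ nonempty, and the Voronoi regions respect the splitting $J=J_1\uu J_2$. Writing $n_1:=\te{card}(\gb_1)$ and $n_2:=n-n_1$, Lemma~\ref{lemma33} tells us that $U_{n_1}^{-1}(T_1^{-1}(\gb_1))$ and $U_{n_2}^{-1}(T_2^{-1}(\gb_2))$ are optimal sets of $n_1$- and $n_2$-points respectively, and
\[
V_n=\frac1{18}\Big(V_{n_1}+V_{n_2}-\frac1{n_1^2}-\frac1{n_2^2}\Big)+\frac1{n^2}.
\]
By the induction hypothesis, each of these smaller optimal sets has the form prescribed by Definition~\ref{defi1}, so $\ga_n$ is obtained by taking such configurations inside $J_1$ and $J_2$; it remains to identify which pairs $(n_1,n_2)$ with $n_1+n_2=n$ actually minimize $V_n$, and to check that the minimizing choice reproduces $\ga_n(I)$.

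The core of the argument is therefore the optimization over the split $(n_1,n_2)$. I would substitute the closed-form expression from Proposition~\ref{prop0} for $V_{n_1}$ and $V_{n_2}$ (noting $V_{n_i}-\tfrac1{n_i^2}=\tfrac1{18^{\ell(n_i)}}V(2^{\ell(n_i)+1}-n_i+\tfrac19(n_i-2^{\ell(n_i)}))$, i.e.\ the distortion of the projected set $U_{n_i}(\ga_{n_i}(I))$) into the recursion and minimize the resulting function of $n_1$. The key structural fact to extract is that the minimizing split keeps $n_1$ and $n_2$ as balanced as the dyadic structure allows: when $2^{\ell}\le n<2^{\ell+1}$, the optimal split sends roughly half of the ``excess'' $n-2^{\ell}$ into each child, consistent with $\ell(n_1)=\ell(n_2)=\ell-1$ and $\ell(n)=\ell$. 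Concretely, one checks that the function $n_1\mapsto \tfrac1{18}(\tilde V_{n_1}+\tilde V_{n-n_1})$ (where $\tilde V_m$ denotes the projected distortion) is minimized exactly when the corresponding $\ga_{n_1}(I)$ and $\ga_{n-n_1}(I)$ together refine a common level-$\ell$ configuration — which is precisely the combinatorial content of Definition~\ref{defi1}. Then a direct computation verifies that this recursive combination yields the formula of Proposition~\ref{prop0} for $V_n$, closing the induction. The independence of $V_n$ from the particular choice of $I$ (only $\te{card}(I)$ matters) follows because the formula in Proposition~\ref{prop0} depends only on $n$ and $\ell(n)$.

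The main obstacle I anticipate is the discrete minimization over $(n_1,n_2)$: one must rule out ``unbalanced'' splits — e.g.\ putting too many points in one child so that its $\ell$-value jumps — and this requires a careful comparison of the quantities $\tfrac1{18^{\ell(n_1)}}$ versus $\tfrac1{18^{\ell(n_2)}}$ against the linear correction terms, across all the cases determined by where $n$ sits relative to powers of $2$. A clean way to organize this is to prove a monotonicity/convexity lemma for the sequence $\tilde V_m$ in $m$ (it decreases, and successive differences behave predictably when $m$ crosses a power of $2$), from which the balanced split falls out; this also automatically gives strict decrease of $V_n-V_{\infty}$, which is needed later for the quantization dimension. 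The remaining steps — verifying the base cases match Definition~\ref{defi1}, and confirming the arithmetic identity between the recursion output and Proposition~\ref{prop0} — are routine but should be stated so the reader can check the bookkeeping of $\ell(n)$, $n-2^{\ell(n)}$, and the $\tfrac19$ contraction factors.
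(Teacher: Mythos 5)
Your proposal is correct and follows essentially the same route as the paper: induction driven by Proposition~\ref{prop32}, Corollary~\ref{cor32}, and the recursion of Lemma~\ref{lemma33}, with the crux being that the optimal split $(n_1,n_2)$ must satisfy $\ell(n_1)=\ell(n_2)=\ell(n)-1$. The only difference is in how that balanced-split step is discharged — the paper reduces the resulting inequality to Equation~5.9 of Graf--Luschgy and cites their case analysis, whereas you propose proving it directly via convexity of the piecewise-linear projected distortion $\tilde V_m$ (which does work, since $\tilde V_m$ is linear on each dyadic block with slope $-\tfrac{8V}{9\cdot 18^{\ell}}$ strictly increasing in $\ell$).
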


\begin{proof}
We will proceed by induction on $\ell(n)$. If $\ell(n)=1$, then the theorem is true by Proposition \ref{prop31}. Let us assume that the theorem is true for all $\ell(n)<m$, where $m\in \D N$ and $m\geq 2$. We now show that the theorem is true if $\ell(n)=m$.  
Let $\ga_n:=\ga_n(I)$ be an optimal set of $n$-points for $P$ such that $2^m\leq n<2^{m+1}$. Set $\gb_1:=U_n(\ga_n)\ii J_1$, $\gb_2:=U_n(\ga_n)\ii J_2$,  $n_1:=\te{card}(\gb_1)$, and $n_2:=\te{card}(\gb_2)$. Then, by Lemma \ref{lemma33}, we have
\begin{equation} \label{eq341} V_n= \frac 1{18}\Big(V_{n_1}+V_{n_2}-\frac 1 {n_1^2} - \frac 1{n_2^2}\Big)+\frac 1 {n^2}.
\end{equation}
Without any loss of generality, we can assume that $n_1\geq n_2$. Let $p,  q\in \D N$ be such that
\begin{equation}\label{eq35} 2^p\leq n_1< 2^{p+1} \te{ and } 2^q\leq n_2< 2^{q+1}.
\end{equation}
We will show that $p=q=m-1$. Since $n_1\geq n_2$, we have $n_1\geq 2^{m-1}$ and $n_2< 2^{m}$. Hence, $p\geq m-1$ and $q\leq m-1$. 
If $\tilde V_n$ is the distortion error due to the set
\[\set{(a(\gs), \frac 1{n}) : \gs \in \set{1,2}^{m} \setminus I} \uu \set{(a(\gs 1), \frac 1 n) : \gs \in I} \uu \set {(a(\gs 2), \frac  1n) : \gs \in I},\]
where $I\sci \set{1, 2}^{m}$ with card$(I)=n-2^{m}$, then by Proposition \ref{prop0}, we have
\begin{align*}
\tilde V_n=\frac 1 {18^{m}}V\Big(2^{m+1}-n+\frac 1 9(n-2^{m})\Big)+\frac 1 {n^2}.
\end{align*}
Thus, by the induction hypothesis, we have  $\tilde V_n\geq V_n$, and then Equation~\eqref{eq341} implies that   
\begin{align*}
&\frac 1 {18^{m}}V\Big(2^{m+1}-n+\frac 1 9(n-2^{m})\Big)+\frac 1 {n^2} \geq \frac 1{18}\Big(V_{n_1}+V_{n-n_1}-\frac 1 {n_1^2} - \frac 1{(n-n_1)^2}\Big)+\frac 1 {n^2},
\end{align*}
i.e., 
\begin{align*}
\frac 1 {18^{m}}V\Big(2^{m+1}-n+\frac 1 9(n-2^{m})\Big) \geq&\frac 1{18^{m+1}}\Big(V\Big(2^{p+1}-n_1+\frac 1 9(n_1-2^{p})\Big)\\
&+\Big(2^{q+1}-n_2+\frac 1 9(n_2-2^{q})\Big)\Big),
\end{align*}
which is the same as Equation~5.9 in \cite{GL2}. Thus, proceeding in a similar way as \cite{GL2}, we have  $p=q=m-1$. By Lemma \ref{lemma33},
$U_{n_1}^{-1}(T_1^{-1}(\gb_1))$ is an optimal set of $n_1$-points, $U_{n-n_1}^{-1}(T_2^{-1}(\gb_2))$ is an optimal set of $n_2:=(n-n_1)$-points. Moreover, we have proved $2^{m-1}\leq n_1<2^{m}$, and $2^{m-1}\leq n_2<2^{m}$. Hence, by the induction hypothesis,
\begin{align*}
&U_{n_1}^{-1}(T_1^{-1}(\gb_1))\\
=&\set{(a(\gs), \frac 1{n}) : \gs \in \set{1,2}^{m-1} \setminus I_1} \uu \set{(a(\gs 1), \frac 1 n) : \gs \in I_1} \uu \set {(a(\gs 2), \frac  1n) : \gs \in I_1},
\end{align*}
where $I_1\sci \set{1, 2}^{m-1}$ with card$(I_1)=n_1-2^{m-1}$; 
and  
\begin{align*}
&U_{n_2}^{-1}(T_2^{-1}(\gb_2))\\
=&\set{(a(\gs), \frac 1{n}) : \gs \in \set{1,2}^{m-1} \setminus I_2} \uu \set{(a(\gs 1), \frac 1 n) : \gs \in I_2} \uu \set {(a(\gs 2), \frac  1n) : \gs \in I_2},
\end{align*}
where $I_2\sci \set{1, 2}^{m-1}$ with card$(I_2)=n_2-2^{m-1}$. Then, notice that 
\[\gb_1=\set{a(1\gs) : \gs \in \set{1,2}^{m-1} \setminus I_1} \uu \set{a(1\gs 1) : \gs \in I_1} \uu \set {a(1\gs 2) : \gs \in I_1},\]
 and  
\[\gb_2=\set{a(2\gs) : \gs \in \set{1,2}^{m-1} \setminus I_2} \uu \set{a(2\gs 1) : \gs \in I_2} \uu \set {a(2\gs 2) : \gs \in I_2}.\]
Take $I:=I_1\uu I_2$, and then 
\[\te{card}(I)=\te{card}(I_1)+\te{card}(I_2)=n_1-2^{m-1}+n_2-2^{m-1}=n-2^{m}.\] Hence, 
\begin{align*}
U_n(\ga_n)=\gb_1\uu \gb_2=\set{a(\gs) : \gs \in \set{1,2}^{m} \setminus I} \uu \set{a(\gs 1) : \gs \in I} \uu \set {a(\gs 2) : \gs \in I}. 
\end{align*} 
Thus, we have 
\[\ga_n:=\ga_n(I)=\set{(a(\gs), \frac 1n) : \gs \in \set{1,2}^{m} \setminus I} \uu \set{(a(\gs 1), \frac 1n) : \gs \in I} \uu \set {(a(\gs 2), \frac 1 n) : \gs \in I},\]
and using Equation~\eqref{eq341}, we have the constrained quantization error as 
\begin{align*}
V_n&= \frac 1{18}\Big(V_{n_1}+V_{n_2}-\frac 1 {n_1^2} - \frac 1{n_2^2}\Big)+\frac 1 {n^2}\\
&=\frac 1{18}\Big(\frac 1{18^{m-1}}\Big(2^{m}-n_1+\frac 1 9(n_1-2^{m-1})\Big) +\frac 1{18^{m-1}}\Big(2^{m}-n_2+\frac 1 9(n_2-2^{m-1})\Big)\Big)+\frac 1 {n^2}\\
&=\frac 1{18^{m}}\Big(2^{m+1}-n+\frac 1 9(n-2^{m})\Big)+\frac 1 {n^2}. 
\end{align*}
Thus, the theorem is true if $\ell(n)=m$. Hence, by the induction principle, the proof of the theorem is complete.
\end{proof}

 \section{Constrained quantization dimension and constrained quantization coefficient} \label{sec3}
 Let $P=\frac 12 P\circ T_1^{-1}+\frac 12 P\circ T_2^{-1}$ be a unique Borel probability measure on $\D R$ with support the Cantor set $C$ generated by
 the two contractive similarity mappings $T_1(x)=\frac 13 x$ and $T_2(x)=\frac 13 x+\frac 23$ for all $x\in \D R$.
 Since the Cantor set $ C$ under investigation satisfies the strong separation condition, with each $T_j$ having a contracting factor of $\frac{1}{3}$, the Hausdorff dimension of the Cantor set is equal to the similarity dimension.  Hence, from the equation $2(\frac 1{3})^\gb=1$,  we have $\dim_{\te{H}}(C)=\beta =\frac {\log 2}{\log 3}$.  It is known that the unconstrained quantization dimension of the probability measure $P$ exists and equals the Hausdorff dimension of the Cantor set (see \cite{GL2}). The work in this section shows that it is not true in the constrained case, i.e., the constrained quantization dimension $D(P)$ of the probability measure $P$, though it exists, is not necessarily equal to the Hausdorff dimension of the Cantor set. In this section, we show that the constrained quantization dimension $D(P)$ exists and equals one. We further show that the $D(P)$-dimensional constrained quantization coefficient exists as a finite positive number and equals $D(P)$, which is also not true in the unconstrained quantization for the Cantor distribution.

\begin{theorem}\label{theo2} 
The constrained quantization dimension $D(P)$ of the probability measure $P$ exists, and $D(P)=1$. 
\end{theorem}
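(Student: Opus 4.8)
The plan is to use the explicit formula for $V_n$ from Theorem~\ref{theo1} and Proposition~\ref{prop0}, namely
\[
V_n = \frac{1}{18^{\ell(n)}}\,V\Big(2^{\ell(n)+1}-n+\tfrac19\big(n-2^{\ell(n)}\big)\Big) + \frac{1}{n^2},
\]
together with the definition \eqref{eq001000} of the constrained quantization dimension. First I would identify $V_{\infty}(P) = \lim_{n\to\infty} V_n$. Since $\ell(n)\to\infty$ and the bracketed factor $2^{\ell(n)+1}-n+\tfrac19(n-2^{\ell(n)})$ stays bounded (it lies between roughly $\tfrac19\cdot 2^{\ell(n)}$ and $2^{\ell(n)}$, wait — it does grow; so the first term is $\Theta\big((2/18)^{\ell(n)}\big)=\Theta(9^{-\ell(n)})\to 0$), the first term tends to $0$, and $1/n^2\to 0$, so $V_\infty(P)=0$. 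Hence the constrained quantization dimension reduces to its unconstrained-style form $D(P)=\lim_{n\to\infty} \frac{2\log n}{-\log V_n}$ (recall $r=2$).

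Next I would estimate $-\log V_n$. Writing $\ell=\ell(n)$, so $2^\ell\le n<2^{\ell+1}$, the term $1/n^2$ is of order $1/n^2 \asymp 4^{-\ell}$, while the first term $\tfrac{1}{18^\ell}V(\cdots)$ is of order $9^{-\ell}\cdot\Theta(2^\ell) = \Theta(9^{-\ell}\cdot 2^\ell)$; comparing $4^{-\ell}$ with $2^\ell 9^{-\ell} = (2/9)^\ell$, we have $(2/9)^\ell$ vs $(1/4)^\ell$, and since $2/9 < 1/4$ the first term is negligible compared with $1/n^2$. Therefore $V_n \sim 1/n^2$ as $n\to\infty$, more precisely $V_n = \tfrac{1}{n^2}(1+o(1))$, so $-\log V_n = 2\log n + o(\log n)$. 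Plugging in gives
\[
D(P) = \lim_{n\to\infty}\frac{2\log n}{2\log n + o(\log n)} = 1.
\]
I would present this as a clean squeeze: for all large $n$, $\tfrac{1}{n^2} \le V_n \le \tfrac{1}{n^2} + C\,(2/9)^\ell \le \tfrac{C'}{n^2}$ for suitable constants, hence $-\log V_n \in [2\log n - \log C',\, 2\log n]$, and the ratio $\tfrac{2\log n}{-\log V_n}$ is trapped between two quantities both tending to $1$.

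The only mildly delicate point — and the place I would be most careful — is making the asymptotics in $\ell(n)$ uniform as $n$ ranges over the dyadic block $[2^\ell, 2^{\ell+1})$, so that the estimate $V_n = \tfrac{1}{n^2}(1+o(1))$ holds as $n\to\infty$ through all integers and not merely along a subsequence; this just requires noting that the bracketed factor is bounded above by $2^{\ell+1}$ uniformly and $n\ge 2^\ell$, so the ratio $\big(\tfrac{1}{18^\ell}V(\cdots)\big)\big/\big(\tfrac{1}{n^2}\big) \le C\,2^\ell\cdot 4^\ell/9^\ell = C\,(8/9)^\ell \to 0$ uniformly in the block. With that uniformity in hand the squeeze argument above finishes the proof. (Incidentally the computation also shows $V_n-V_\infty(P) = V_n$ behaves like $n^{-2}$, which is exactly what is needed in the next result about the $D(P)$-dimensional coefficient.)
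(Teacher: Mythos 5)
Your proof is correct and follows essentially the same route as the paper: both rest on the explicit formula for $V_n$ from Theorem~\ref{theo1} and on the observation that the Cantor term $\frac{1}{18^{\ell(n)}}V(\cdots)=O\big(9^{-\ell(n)}\big)$ is dominated by the constraint term $n^{-2}\asymp 4^{-\ell(n)}$, the paper squeezing the log-ratio between its values at $2^{\ell(n)}$ and $2^{\ell(n)+1}$ while you squeeze $V_n$ itself between $n^{-2}$ and $C'n^{-2}$. The two small arithmetic slips in your second and fourth paragraphs --- writing the first term as $\Theta\big(2^{\ell}9^{-\ell}\big)$ and the ratio bound as $C(8/9)^{\ell}$, where the correct prefactor $18^{-\ell}$ gives $\Theta\big(9^{-\ell}\big)$ and $C(4/9)^{\ell}$ --- are harmless, since each of these quantities is still $o\big(4^{-\ell}\big)$, respectively $o(1)$, so the conclusion is unaffected.
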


\begin{proof}
For $n\in \D N$ with $n\geq 2$, let $\ell(n)$ be the unique natural number such that $2^{\ell(n)}\leq n<2^{\ell(n)+1}$. Then,
$V_{2^{\ell(n)+1}}\leq V_n\leq V_{2^{\ell(n)}}$. By Theorem \ref{theo1}, we see that $V_{2^{\ell(n)+1}}\to 0$ and  $V_{2^{\ell(n)}}\to 0 $ as $n\to \infty$, and so $V_n\to 0$ as $n\to \infty$,
i.e., $V_\infty=0$.
We can take $n$ large enough so that $ V_{2^{\ell(n)}}-V_{\infty}<1$. Then,
\[0<-\log (V_{2^{\ell(n)}}-V_{\infty})\leq -\log (V_n-V_\infty)\leq -\log (V_{2^{\ell(n)+1}}-V_\infty)\]
yielding
\[\frac{2\ell(n) \log 2}{-\log (V_{2^{\ell(n)+1}}-V_\infty)}\leq \frac{2\log n}{-\log(V_n-V_\infty)}\leq \frac{2(\ell(n)+1) \log 2}{-\log (V_{2^{\ell(n)}}-V_\infty)}.\]
Notice that
\begin{align*}
\lim_{n\to \infty} \frac{2\ell(n) \log 2}{-\log (V_{2^{\ell(n)+1}}-V_\infty)}&=\lim_{n\to \infty} \frac{2\ell(n) \log 2}{-\log(\frac{V}{9^{\ell(n)+1}}+\frac 1{4^{\ell(n)+1}})}=1, \te{ and }\\
\lim_{n\to \infty} \frac{2(\ell(n)+1) \log 2}{-\log (V_{2^{\ell(n)}}-V_\infty)}&=\lim_{n\to \infty} \frac{2(\ell(n)+1) \log 2}{-\log(\frac{V}{9^{\ell(n)}}+\frac 1{4^{\ell(n)}})}=1.
\end{align*}
Hence, $\lim_{n\to \infty}  \frac{2\log n}{-\log(V_n-V_\infty)}=1$, i.e., the constrained quantization dimension $D(P)$ of the probability measure $P$ exists and $D(P)=1$.
Thus, the proof of
the theorem is complete.
\end{proof}

\begin{theorem} \label{theo3} 
The $D(P)$-dimensional constrained quantization coefficient for $P$ is a finite positive number and equals the constrained quantization dimension $D(P)$.
\end{theorem}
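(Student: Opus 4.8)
The plan is to read off the coefficient directly from the closed form of $V_n$. By Theorem~\ref{theo2} we already know $D(P)=1$ and $V_\infty=V_{\infty,2}(P)=0$, and since $r=2$ throughout, the $D(P)$-dimensional constrained quantization coefficient in \eqref{eq00100} is
\[
\lim_{n\to\infty} n^{\,r/D(P)}\big(V_n-V_\infty\big)=\lim_{n\to\infty} n^{2}V_n ,
\]
so the whole statement reduces to showing that this limit exists, is finite and positive, and equals $1$. First I would invoke Theorem~\ref{theo1} (equivalently Proposition~\ref{prop0}) to write, for $n\geq 2$ with $\ell:=\ell(n)$,
\[
n^{2}V_n=\frac{n^{2}}{18^{\ell}}\,V\Big(2^{\ell+1}-n+\tfrac19\big(n-2^{\ell}\big)\Big)+1 ,
\]
so it suffices to prove that the first summand tends to $0$ as $n\to\infty$, leaving exactly the constant $1$ (which is the contribution of the $\tfrac1{n^2}$ term of $V_n$ multiplied by $n^2$).

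The key estimate is obtained by setting $n=2^{\ell}+t$ with $0\le t<2^{\ell}$, which yields the identity $2^{\ell+1}-n+\tfrac19(n-2^{\ell})=2^{\ell}-\tfrac{8t}{9}$ and hence the two-sided bound $\tfrac19\,2^{\ell}<2^{\ell+1}-n+\tfrac19(n-2^{\ell})\le 2^{\ell}$. Combining this with $2^{\ell}/18^{\ell}=1/9^{\ell}$ and with $n<2^{\ell+1}$, so that $n^{2}<4^{\ell+1}$, gives
\[
0<\frac{n^{2}}{18^{\ell}}\,V\Big(2^{\ell+1}-n+\tfrac19\big(n-2^{\ell}\big)\Big)\le \frac{V\,n^{2}}{9^{\ell}}<4V\Big(\tfrac49\Big)^{\ell}.
\]
Since $\ell(n)\to\infty$ as $n\to\infty$ and $\tfrac49<1$, the right-hand side converges to $0$; therefore $n^{2}V_n\to 1$. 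Hence the $D(P)$-dimensional constrained quantization coefficient exists, equals $1=D(P)$, and is in particular a finite positive number, which is precisely the assertion of the theorem.

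I do not anticipate a serious obstacle: the argument is an elementary squeeze once the closed form of $V_n$ is in hand. The only point requiring a little care is separating the two scales present in $V_n$ — the geometric "variance part" $\tfrac{1}{18^{\ell}}V(\,\cdot\,)$ versus the $\tfrac1{n^2}$ part — and observing that the factor $4^{\ell}/9^{\ell}$ forces the variance contribution to vanish in the limit, so that the coefficient is governed entirely by the $\tfrac1{n^2}$ term. This is exactly the phenomenon that fails in the unconstrained case (where there is no $\tfrac1{n^2}$ term and the surviving geometric oscillation prevents the $D(P)$-dimensional quantization coefficient from existing), and it is worth flagging this contrast explicitly right after the proof.
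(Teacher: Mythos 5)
Your proof is correct and follows essentially the same route as the paper: use the closed form of $V_n$ from Theorem~\ref{theo1}, bound the variance term by a constant multiple of $(4/9)^{\ell(n)}$ so that $n^2$ times it vanishes, and conclude $n^2(V_n-V_\infty)\to 1$ from the residual $1/n^2$ term. The only cosmetic difference is your substitution $n=2^{\ell}+t$ giving a slightly sharper two-sided bound, where the paper uses the cruder estimate $0\le 2^{\ell+1}-n+\frac19(n-2^{\ell})<2^{\ell+1}$.
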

\begin{proof}
For $n\in \D N$ with $n\geq 2$, let $\ell(n)$ be the unique natural number such that $2^{\ell(n)}\leq n<2^{\ell(n)+1}$.
Then, $0\leq 2^{\ell(n)+1} -n<2^{\ell(n)}$ and $0\leq \frac 19(n-2^{\ell(n)})<2^{\ell(n)}$. Hence, 
\[0\leq 2^{\ell(n)+1} -n +\frac 19(n-2^{\ell(n)})<2^{\ell(n)+1},\]
which implies
\begin{align*}
 0\leq \frac 1 {18^{\ell(n)}}V\Big(2^{\ell(n)+1}-n+\frac 1 9(n-2^{\ell(n)})\Big)<\frac{2^{\ell(n)+1}}{18^{\ell(n)}}
\end{align*}
yielding 
\[0\leq {n^2}  \Big(\frac 1{18^{\ell(n)}}V\Big(2^{\ell(n)+1}-n+\frac 1 9(n-2^{\ell(n)})\Big)\Big)<n^2\frac{2^{\ell(n)+1}}{18^{\ell(n)}}<\frac{2^{2\ell(n)+2}2^{\ell(n)+1}}{18^{\ell(n)}}=8\Big(\frac 4 9\Big)^{\ell(n)}.\]
Hence, by the squeeze theorem, we have 
\[\lim_{n\to \infty} {n^2}  \Big(\frac 1{18^{\ell(n)}}V\Big(2^{\ell(n)+1}-n+\frac 1 9(n-2^{\ell(n)})\Big)\Big)=0.\]
Again, as shown in the proof of Theorem \ref{theo2}, we have $V_\infty=\lim_{n\to \infty} V_n=0$. 
Thus, we deduce that 
\[\lim_{n\to \infty} n^2 (V_n-V_\infty)=1,\]
i.e., the  $D(P)$-dimensional constrained quantization coefficient for $P$ exists as a finite positive number and equals the constrained quantization dimension $D(P)$, which is the theorem. 
\end{proof} 

\section{Constrained quantization with some other families of constraints}
As defined in the previous sections, let $P=\frac 12 P\circ T_1^{-1}+\frac 12 P\circ T_2^{-1}$ be the unique Borel probability measure on $\D R$ with support the Cantor set $C$ generated by the two contractive similarity mappings $T_1(x)=\frac 13 x$ and $T_2(x)=\frac 13 x+\frac 23$ for all $x\in \D R$. In this section, in the following subsections, we give the optimal sets of $n$-points and the $n$th constrained quantization errors for different families of constraints. Then, for each family, we investigate the constrained quantization dimension and the constrained quantization coefficient.  

\subsection{Constrained quantization when the family is  
\texorpdfstring{$S_j=\set{(x, y) : 0\leq x\leq 1 \te{ and } y=1+\frac 1j}
$}{} for \texorpdfstring{$j\in \D N$}{}. }
Using the similar arguments as Lemma \ref{lemma0}, it can be shown that if $\ga_n$ is an optimal set of $n$-points for $P$, then $\ga_n\ci S_n$ for all $n\in \D N$.
Let us first state the following theorem, the proof of which is similar to Theorem~\ref{theo1}.

\begin{theo1}
 For $n\in \D N$ with $n\geq 2$ let $\ell(n)$ be the unique natural number with $2^{\ell(n)} \leq n<2^{\ell(n)+1}$. For $I\sci \set{1, 2}^{\ell(n)}$ with card$(I)=n-2^{\ell(n)}$ let $\ga_n(I)\ci S_n$ be the set such that
{\footnotesize\[\ga_n(I)=\set{(a(\gs), 1+\frac 1{n}) : \gs \in \set{1,2}^{\ell(n)} \setminus I} \uu \set{(a(\gs 1), 1+\frac 1 n) : \gs \in I} \uu \set {(a(\gs 2), 1+\frac  1n) : \gs \in I}.\]}
Then, $\ga_n:=\ga_n(I)$ forms an optimal set of $n$-points for $P$ with constrained quantization error 
\[V_n=\frac 1 {18^{\ell(n)}}V\Big(2^{\ell(n)+1}-n+\frac 1 9(n-2^{\ell(n)})\Big)+(1+\frac 1 n)^2,\]
where $V$ is the variance.
\end{theo1} 

\begin{remark1}
Notice that here $V_\infty=\mathop{\lim}\limits_{n\to \infty} V_n=1$. Thus, proceeding in the similar way as Theorem \ref{theo2} and Theorem \ref{theo3}, it can be seen that
\[\lim_{n\to \infty}  \frac{2\log n}{-\log(V_n-V_\infty)}=2 \te{ and } \lim_{n\to \infty} n^2 (V_n-V_\infty)=\infty,\]
i.e., the constrained quantization dimension $D(P)$ exists and equals $2$, but the constrained quantization coefficient does not exist. 
\end{remark1} 

\subsection{Constrained quantization when the family is  
\texorpdfstring{$S_j=\set{(x, y) : 0\leq x\leq 1 \te{ and } y=1}
$}{} for \texorpdfstring{$j\in \D N$}{}. }

Proceeding in a similar way as Theorem \ref{theo1}, we can show that the following theorem is true. 

\begin{theo1}
 For $n\in \D N$ with $n\geq 2$ let $\ell(n)$ be the unique natural number with $2^{\ell(n)} \leq n<2^{\ell(n)+1}$. For $I\sci \set{1, 2}^{\ell(n)}$ with card$(I)=n-2^{\ell(n)}$ let $\ga_n(I)\ci S_n$ be the set such that
\[\ga_n(I)=\set{(a(\gs), 1) : \gs \in \set{1,2}^{\ell(n)} \setminus I} \uu \set{(a(\gs 1), 1 ) : \gs \in I} \uu \set {(a(\gs 2), 1) : \gs \in I}.\]
Then, $\ga_n:=\ga_n(I)$ forms an optimal set of $n$-points for $P$ with constrained quantization error 
\[V_n=\frac 1 {18^{\ell(n)}}V\Big(2^{\ell(n)+1}-n+\frac 1 9(n-2^{\ell(n)})\Big)+1,\]
where $V$ is the variance.
\end{theo1} 
 
Notice that here $V_\infty=\mathop{\lim}\limits_{n\to \infty} V_n=1$. If $\gb$ is the Hausdorff dimension of the Cantor set $\C C$, then, $\gb=\frac{\log 2}{\log 3}$. Then, the following lemma and theorems are equivalent to the lemma and theorems that appear in the last section in \cite{GL2}. For the proofs, one can consult \cite{GL2}. 

\begin{theo1}
The set of accumulation points of the sequence $\Big(n^{\frac 2{\gb}} (V_n-V_\infty)\Big)_{n\in \D N}$ equals 
\[\Big[V, f\Big(\frac {17}{8+4\gb}\Big)\Big],\]
where $f : [1, 2] \to \D R$ is such that $f(x)=\frac 1{72} x^{\frac 2\gb}(17-8x)$. 
\end{theo1} 

\begin{lemma1}
Let $n\in \D N$. Then, \[\frac 1{72} \leq n^{\frac 2{\gb}} (V_n-V_\infty)\leq \frac 9 8.\]
\end{lemma1} 
\begin{theo1}
The constrained quantization dimension of $P$ equals the Hausdorff dimension $\gb:=\frac{\log 2}{\log 3}$
of the Cantor set, i.e., 
\[D(P)=\lim_{n\to \infty}  \frac{2\log n}{-\log(V_n-V_\infty)}=\gb.\]
\end{theo1} 

\begin{remark1}
Thus, in this case, we see that the constrained quantization dimension exists and equals the Hausdorff dimension $\gb$ of the Cantor set, but the constrained quantization coefficient does not exist. 
\end{remark1}

\subsection{Constrained quantization when the family is  
\texorpdfstring{$S_j=\set{(x, y) : 0\leq x\leq 1 \te{ and } y=1-\frac 1j}$}{} for \texorpdfstring{$j\in \D N$}{}. }
Using the similar arguments as Lemma \ref{lemma0}, it can be shown that if $\ga_n$ is an optimal set of $n$-points for $P$, then $\ga_n\ci S_1$ for all $n\in \D N$.
Let us first state the following theorem, the proof of which is similar to Theorem \ref{theo1}.

\begin{theo1} \label{theo34} 
 For $n\in \D N$ with $n\geq 2$ let $\ell(n)$ be the unique natural number with $2^{\ell(n)} \leq n<2^{\ell(n)+1}$. For $I\sci \set{1, 2}^{\ell(n)}$ with card$(I)=n-2^{\ell(n)}$ let $\ga_n(I)\ci S_1$ be the set such that
\[\ga_n(I)=\set{(a(\gs), 0) : \gs \in \set{1,2}^{\ell(n)} \setminus I} \uu \set{(a(\gs 1), 0) : \gs \in I} \uu \set {(a(\gs 2), 0) : \gs \in I}.\]
Then, $\ga_n:=\ga_n(I)$ forms an optimal set of $n$-points for $P$ with constrained quantization error 
\[V_n=\frac 1 {18^{\ell(n)}}V\Big(2^{\ell(n)+1}-n+\frac 1 9(n-2^{\ell(n)})\Big),\]
where $V$ is the variance.
\end{theo1} 

\begin{remark1}
By Theorem \ref{theo34}, we see that for the family $S_j=\set{(x, y) : 0\leq x\leq 1 \te{ and } y=1-\frac 1j}$, where $j\in \D N$, the optimal sets of $n$-points and the corresponding constrained quantization error $V_n$ coincide with the optimal sets of $n$-means and the corresponding quantization error for the Cantor distribution $P$ that occurs in \cite{GL2}. Thus, all the results in \cite{GL2} are also true here. 
\end{remark1}

\subsection*{Acknowledgement}
The first author is grateful to her supervisor \textit{Professor Tanmoy Som} of the IIT(BHU), Varanasi, India, for his constant support in preparing this manuscript.

\section*{Declaration}

\noindent
\textbf{Conflicts of interest.} We do not have any conflict of interest.\\
\\
\noindent
\textbf{Data availability:} No data were used to support this study.\\
\\
\noindent
\textbf{Code availability:} Not applicable\\
\\
\noindent
\textbf{Authors' contributions:} Each author contributed equally to this manuscript.

 \bibliographystyle{unsrt}
  \bibliography{References}

 \iffalse

 \fi
\end{document}